\title{A computer algorithm for the BGG resolution}
\author{Nicolas Hemelsoet\thanks{University of Geneva, \href{mailto:nicolas.hemelsoet@gmail.com}{nicolas.hemelsoet@unige.com}}, Rik Voorhaar\thanks{University of Geneva, \href{mailto:wh.voorhaar@gmail.com}{wh.voorhaar@gmail.com}}}
\date{}
\newcommand{\C}{\mathbb{C}}
\renewcommand{\O}{\mathcal O}
\newcommand{\SL}{\rm{SL}}
\newcommand{\Hom}{\text{Hom}}
\newcommand{\g}{\mathfrak g}
\newcommand{\h}{\mathfrak h}
\newcommand{\n}{\mathfrak n}
\renewcommand{\u}{\mathfrak u}
\newcommand{\p}{\mathfrak p}
\newcommand{\BGG}{\text{BGG}}
\newcommand{\ad}{\text{ad}}
\renewcommand{\b}{\mathfrak b}
\renewcommand{\sl}{\mathfrak{sl}}
\newcommand{\sym}{\mathrm{Sym}}
\renewcommand{\ad}{\text{ad}}
\renewcommand{\sl}{\mathfrak{sl}}
\newcommand{\ch}{\text{ch}}
\renewcommand{\ch}{\text{ch}}
\newcommand{\wt}{\mathrm{wt}}
\newtheorem{theorem}{Theorem}[section]
\newtheorem{lemma}[theorem]{Lemma}
\newtheorem{remark}[theorem]{Remark}
\newtheorem{proposition}[theorem]{Proposition}
\newtheorem{definition}[theorem]{Definition}
\begin{document}

\maketitle

\begin{abstract}
    We present a computer algorithm to explicitly compute the BGG resolution and its cohomology. We give several applications, in particular computation of various sheaf cohomology groups on flag varieties. An implementation of the algorithm is available at \url{https://github.com/RikVoorhaar/bgg-cohomology}.
\end{abstract}

\tableofcontents

\section{Introduction}

Let $\g$ be a simple complex finite-dimensional Lie algebra and let $\lambda \in P^+$ be a dominant integral weight. Bernstein-Gelfand-Gelfand \cite{BGG} introduced a complex giving a resolution of the simple module $L(\lambda)$ by direct sums of Verma modules on the form $M(w \cdot \lambda)$, where $w \in W$ is an element of the Weyl group and $w \cdot \lambda$ is the `dot action' defined by $w \cdot \lambda = w(\lambda + \rho) - \rho$. Here $\rho$ is half the sum of positive roots. This resolution is known as the BGG resolution, and the $k$-th term of the BGG complex is 
\[
    \bigoplus_{\{w\in W\,\mid\,\ell(w) = k\}}\hspace{-2em} M(w \cdot \lambda).
\]
On the character level, this gives the Weyl character formula: 
\begin{equation}
    \ch(L(\lambda)) = \sum_{w \in W} (-1)^{\ell(w)}\ch M(w \cdot \lambda).
\end{equation}

Several applications of the BGG resolution have been obtained in different domains. For example Gabber-Joseph \cite{GJ} constructed a resolution of certain primitive quotients of $U(\g)$. Certain kind of vanishing theorems and character formulas for $\text{Ext}$-groups in the BGG category $\O$ can be obtained from the resolution, as explained in \cite{humphreys}. One can also study certain differential operators on homogenous spaces (see \cite{penrose}), where morphisms between Verma modules are interpreted as differential operators. It also appeared in geometric representation theory, for example in \cite{schechtman} where complexes similar to BGG complexs are constructed using local systems on the blow-up of configuration spaces. Recently a BGG-type resolution was even constructed for quiver-Hecke algebras and Cherednik algebras in positive characteristic \cite{bowman}. 

Our main motivation was the work of Lachowska and Qi \cite{LQ}, which described the Hochschild cohomology of the small quantum group as the cohomology of certain coherent sheaves on the Springer resolution. By Bott's theorem, such a cohomology can be obtained using equivariant Lie algebra cohomology. Lachowska-Qi created an algorithm involving the BGG resolution to compute the Lie algebra cohomology, and could compute the dimension of the center of the small quantum group for $\sl_3$ by hand and $\sl_4$ with the help of code in Python written by Bryan Ford for this specific case. \

As the rank grows, the algorithm quickly becomes too complicated to compute by hand. In this paper we propose a computer implementation of this algorithm. Let $\h \subset \b \subset \g$ be a Cartan subalgebra contained in a fixed Borel subalgebra of a complex finite-dimensional Lie algebra $\g$. Using the BGG resolution, our computer algorithm computes the equivariant Lie algebra cohomology $H^{\bullet}(\b, \h, E)$, for a given $\b$-module $E$, with its natural $G$-module structure. These groups can be interpreted as the sheaf cohomology groups $H^{\bullet}(G/B, G \times_B E)$. The algorithm can also be used to compute the maps in the BGG resolution, that are of independent interest. The algorithm is implemented in SageMath \cite{sagemath} and the code is available at \url{https://github.com/RikVoorhaar/bgg-cohomology}. In a separate paper we will focus on applications to the small quantum group. \

The outline of this paper is as follows: in section~\ref{sec:BGGTheory} we recall basic facts about the BGG resolution. In section~\ref{sec:Flag} we explain the relationship with the cohomology of flag varieties and work out two examples. In section~\ref{sec:Algorithm} we present our algorithm, and give some relevant details about its implementation. Some potential extensions and improvements to the algorithm are discussed in section~\ref{sec:Extensions}. Finally, in the section~\ref{sec:Examples} we present several computations obtained with our computer algorithm. 

\subsection*{Acknowlegements}

We would like to thank Anna Lachowska for suggesting the project and for many useful discussions. We are also grateful to NCCR Swissmap for providing partial support for this research.

\section{The BGG resolution}\label{sec:BGGTheory}

In this section, we recall important facts about the BGG resolution of $L(\lambda)$ for $\lambda \in P^+$, mainly following chapter $6$ of \cite{humphreys}.

\subsection{Notation and background}

We fix a simple Lie algebra $\g$ over the complex numbers, and a Borel subalgebra $\b \supset \h$ containing a Cartan subalgebra. Let $\n = [\b,\b]$ and $\u = \n^{\vee}$. Our convention is that $\b = \b^-$, so that the set of positive roots corresponds to $\u$. We pick Chevalley generators $e_i, f_i$ and $h_i$ with the usual convention that the $f_i$ span $\n$. Using the Killing form we can identify $\u$ as the subalgebra generated by the $e_i$. As usual, let $P$ be the weight lattice and $P^+$ be the set of integral dominant weights. For $\lambda \in P^+$ the corresponding simple module is written as $L(\lambda)$, and for $\mu \in \h^*$ we write $M(\mu)$ for the corresponding \emph{Verma module} defined as $M(\mu) \coloneqq \text{Ind}_{U(\b)}^{U(\g)} \mathbb C_{\mu}$. Here $\C_{\mu}$ is the one-dimensional $U(\b)$-module associated to $\mu$. Finally for $x,w \in W$, we use the Bruhat order, that is, $x \to w$ if and only if there is a reflection $t \in T$ so that $tx = w$ and $\ell(w) = \ell(x) + 1$. Here $T$ is the set of reflections of $W$ along the positive roots, which coincide with the set of all elements conjugates to some simple roots. We say that $x < w$ if there is a sequence $x \to \dots \to w$. We also recall the following facts:

\begin{proposition}[\cite{humphreys}]\label{prop1}
For all $\mu, \lambda \in \h^*$ we have $\dim \Hom_{\g}(M(\mu), M(\lambda)) \leq 1$. Moreover, if $\lambda$ is dominant, then for any $x,w \in W$ a morphism $M(w \cdot \mu) \to M(x \cdot \mu)$ exists if and only if $x < w$. Such a morphism is always an embedding.
\end{proposition}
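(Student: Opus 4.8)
The plan is to separate the statement into its homological content --- the bound $\dim\Hom_\g(M(\mu),M(\lambda))\le 1$ together with the fact that every nonzero homomorphism of Verma modules is an embedding --- and its combinatorial content, the Bruhat-order criterion, which is the theorem of Bernstein--Gelfand--Gelfand proper. For the homological part, recall that by the PBW theorem $M(\lambda)$ is free of rank one as a left $U(\n)$-module on its highest weight vector $v_\lambda$, and that $U(\n)$ is a left and right Noetherian domain (no zero divisors since $\n$ is nilpotent, Noetherian since $\n$ is finite-dimensional). A $\g$-homomorphism $\phi\colon M(\mu)\to M(\lambda)$ is the same as a maximal vector of weight $\mu$ in $M(\lambda)$, namely $\phi(v_\mu)=u_\phi\,v_\lambda$ with $u_\phi\in U(\n)$; if $\phi\ne 0$ then $u_\phi\ne 0$, and since $\phi(u\cdot v_\mu)=u\,u_\phi\,v_\lambda$ and $U(\n)$ has no zero divisors, $\phi$ is injective. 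For the bound I would pass to $Q=\mathrm{Frac}\,U(\n)$ (a skew field, since a Noetherian domain is Ore): the exact functor $-\otimes_{U(\n)}Q$ makes each $M(\nu)$ one-dimensional over $Q$ and every nonzero $\phi$ an isomorphism, so two nonzero maps $\phi,\psi$ become proportional over $Q$, and a short weight argument (using that the zero-weight space of $U(\n)$ is $\C$ and $\mathrm{End}_\g M(\mu)=\C$) then forces $u_\phi$ and $u_\psi$ to be $\C$-proportional. This part is classical (Verma, Dixmier), and for full details I would cite \cite{humphreys}.

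For the combinatorial part, let $\lambda$ be dominant, so that $\lambda+\rho$ is regular and $w\mapsto w\cdot\lambda$ is injective on $W$. The ``only if'' direction is soft: a nonzero $\phi\colon M(w\cdot\lambda)\to M(x\cdot\lambda)$ produces a maximal vector of weight $w\cdot\lambda$ in $M(x\cdot\lambda)$, hence a composition factor $L(w\cdot\lambda)$, so the strong linkage principle yields that $w\cdot\lambda$ is strongly linked to $x\cdot\lambda$; one then invokes the combinatorial fact that on a regular orbit strong linkage is the reverse of the Bruhat order, i.e. $w\cdot\lambda\uparrow x\cdot\lambda$ iff $x\le w$. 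For the ``if'' direction I would induct along a Bruhat chain $x=x_0\to x_1\to\cdots\to x_n=w$, producing for each covering relation $x_i\to x_{i+1}$ an embedding $M(x_{i+1}\cdot\lambda)\hookrightarrow M(x_i\cdot\lambda)$ and composing them; by the injectivity just established the composite is again an embedding, hence a nonzero map $M(w\cdot\lambda)\to M(x\cdot\lambda)$. On highest weights a covering relation carries $\nu=x_i\cdot\lambda$ to $s_\beta\cdot\nu$ for a positive root $\beta$ with $\langle\nu+\rho,\beta^\vee\rangle$ a positive integer (automatic here), and the required embedding $M(s_\beta\cdot\nu)\hookrightarrow M(\nu)$ is brought, by a standard inductive reduction, to the case of a simple root $\beta=\alpha$, which is the explicit $\sl_2$-computation exhibiting the singular vector of weight $s_\alpha\cdot\nu$ inside $M(\nu)$.

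The homological part is routine once $U(\n)$ is known to be an Ore domain. I expect the real obstacle to be the ``if'' direction of the combinatorial part: reducing an arbitrary Bruhat chain --- whose covering steps involve non-simple reflections --- to the rank-one situation, and, above all, verifying that the embeddings attached to consecutive steps compose to something nonzero. This last point is exactly where one needs both that nonzero maps of Verma modules are injective and that $\dim\Hom_\g\le1$, and it is the technical core of the BGG theorem.
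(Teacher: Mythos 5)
The paper itself offers no proof of Proposition~\ref{prop1}: it is recalled verbatim from \cite{humphreys}, so the only meaningful comparison is with the standard arguments there, and against those your sketch has one step that genuinely fails, namely the $\dim\Hom_{\g}(M(\mu),M(\lambda))\le 1$ part. Passing to the skew field $Q=\mathrm{Frac}\,U(\n)$ makes every nonzero map between the one-dimensional $Q$-modules $Q\otimes_{U(\n)}M(\mu)$ and $Q\otimes_{U(\n)}M(\lambda)$ invertible, so ``$\phi$ and $\psi$ become proportional over $Q$'' only says $u_\phi u_\psi^{-1}\in Q$, which carries no information. The weight argument you propose does not close this gap: the element $q=u_\phi u_\psi^{-1}$ has weight $0$, but the weight-zero part of $Q$ is enormous (already for $\sl_3$ it contains $f_1f_2f_{12}^{-1}$), and the fact that $U(\n)[0]=\C$ is irrelevant because $q$ need not lie in $U(\n)$; likewise $\mathrm{End}_{\g}(M(\mu))=\C$ cannot be invoked, since the two images $\mathrm{im}\,\phi$ and $\mathrm{im}\,\psi$ need not coincide, so $\phi^{-1}\psi$ is not defined as an endomorphism. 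The genuine role of the Ore property in the classical treatment is different: it shows that any two nonzero submodules of $M(\lambda)$ intersect nontrivially (two nonzero left ideals of $U(\n)$ meet), which is what feeds the unique-simple-submodule (socle) argument by which \cite{humphreys} actually proves $\dim\Hom\le 1$: both maps restrict to scalars on the common simple Verma submodule, and a suitable linear combination killing it must be zero by injectivity. Some version of that extra input is needed; as written, your argument for the multiplicity-one statement is circular in effect, since the missing step is exactly the uniqueness of the maximal vector of a given weight.

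The rest is essentially sound as an outline, with the caveat that it is an outline: injectivity via freeness of $M(\lambda)$ over the domain $U(\n)$ is correct (though the domain property comes from PBW and the polynomial associated graded algebra, not from nilpotency of $\n$), and the combinatorial half correctly identifies the structure of the BGG--Verma theorem, but its two hard ingredients --- the existence of an embedding for a covering relation given by a non-simple reflection (the reduction to the $\sl_2$ case via the exchange lemma, which is precisely Proposition~\ref{prop:splitsquare} here) and the strong linkage principle used for the ``only if'' direction --- are delegated to citations of results of comparable depth to the statement itself. That is defensible given that the paper does exactly the same by citing \cite{humphreys}, but you should not present it as a proof. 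Finally, the composition of embeddings along a Bruhat chain needs only injectivity to be nonzero; $\dim\Hom\le 1$ plays no role there (in the paper it is used instead to reduce $d^2=0$ to a square-by-square condition).
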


\begin{proposition}[\cite{BGG}]\label{prop:bruhatsquare}
If $w,w' \in W$ are such that there is $x \in W$ with $w' \to x \to w$, then there are exactly two such elements, say $x$ and $y$. We call such a quadruple of elements a \emph{square} in $W$ and denote it by $(w',x,y,w)$.
\end{proposition}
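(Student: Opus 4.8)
The plan is to recast this as the \emph{thinness} of the Bruhat order and prove it by induction. First observe that $w'\to x\to w$ forces $\ell(x)=\ell(w')+1$ and $\ell(w)=\ell(w')+2$, while conversely any $x$ with $w'<x<w$ satisfies $\ell(w')<\ell(x)<\ell(w)=\ell(w')+2$, hence $\ell(x)=\ell(w')+1$, so that both $w'<x$ and $x<w$ are covering relations $w'\to x$, $x\to w$. Thus the elements $x$ in the statement are exactly those of the open interval $(w',w):=\{x\in W: w'<x<w\}$, and it suffices to show $|(w',w)|=2$. I would induct on $\ell(w)$, using the \emph{lifting property} of the Bruhat order: if $u<v$ and $s$ is a simple reflection with $\ell(sv)<\ell(v)$, then $su\le v$, and if moreover $\ell(su)>\ell(u)$ then also $u\le sv$. (In particular $(w',w)\neq\varnothing$, since the defining chain for $w'<w$ has length $2$.)

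For the base case $\ell(w')=0$ we have $w'=e$ and $\ell(w)=2$, so $w=st$ for two distinct simple reflections $s,t$, and the elements $\le w$ are precisely the subwords $e,s,t,st$ of the reduced word $st$, whence $(e,w)=\{s,t\}$. For the inductive step, pick a simple reflection $s$ with $\ell(sw)<\ell(w)$ and distinguish two cases. Suppose first $\ell(sw')>\ell(w')$. Then by the lifting property $w'<sw$ and $sw'<w$, so $sw'$ and $sw$ both lie in $(w',w)$ and are distinct; I claim they are the only elements, by the following length squeeze. Given $x\in(w',w)$: if $\ell(sx)<\ell(x)$, lifting applied to $w'<x$ gives $sw'\le x$, and $\ell(sw')=\ell(w')+1=\ell(x)$ forces $x=sw'$; if $\ell(sx)>\ell(x)$, lifting applied to $x<w$ gives $x\le sw$, and $\ell(sw)=\ell(w)-1=\ell(w')+1=\ell(x)$ forces $x=sw$. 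Hence $(w',w)=\{sw',sw\}$.

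It remains to handle the case $\ell(sw')<\ell(w')$. Here the lifting property gives $sw'<sw$, with $\ell(sw)-\ell(sw')=2$ and $\ell(sw)<\ell(w)$, so by induction $|(sw',sw)|=2$, and the plan is to transfer this count to $(w',w)$ through the involution $x\mapsto sx$: for $x\in(w',w)$ with $\ell(sx)<\ell(x)$ one shows $sx\in(sw',sw)$ and that this yields a bijection onto the elements of $(sw',sw)$ other than $w'$, while the at most one $x\in(w',w)$ with $\ell(sx)>\ell(x)$ must be $sw$ (same length squeeze as before), and $sw\in(w',w)$ exactly when $w'<sw$, i.e.\ exactly when $w'\in(sw',sw)$; balancing these contributions against $|(sw',sw)|=2$ gives $|(w',w)|=2$. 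I expect this last step to be the main obstacle: one must check carefully that $x\mapsto sx$ restricts to an honest bijection between the ``$s$-descending'' part of $(w',w)$ and $(sw',sw)\setminus\{w'\}$ — in particular that $(sw',sw)$ cannot consist of two $s$-descending elements (which uses that two distinct elements cannot both equal $w'$) — so that the total comes out to exactly $2$ and not $1$ or $3$. Everything else is length bookkeeping, the subword description used in the base case, and the lifting property.
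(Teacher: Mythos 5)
Your argument is correct, and it is genuinely different from what the paper does: the paper gives no proof at all for this proposition, citing \cite{BGG}, where the statement appears as the classical fact that every length-two interval of the Bruhat order is a ``diamond''. Your route is the now-standard combinatorial proof of thinness of the Bruhat order (induction on $\ell(w)$ via the lifting property plus the subword property for the base case), and it is sound: the reduction to counting the open interval $(w',w)$ is right because covers raise length by exactly $1$, Case 1 is complete as written, and the step you flag as the main obstacle in Case 2 does go through with the same lifting property you already invoked. Concretely: if $y\in(sw',sw)$ is $s$-descending, then lifting applied to $sw'<y$ gives $w'=s(sw')\le y$, and $\ell(w')=\ell(y)$ forces $y=w'$, so $(sw',sw)$ contains at most one $s$-descending element and it can only be $w'$; conversely, for $s$-ascending $y\in(sw',sw)\setminus\{w'\}$, lifting applied to $y\le w$ (with $sw<w$) gives $sy\le w$, and lifting applied to $sw'\le sy$ (with $s(sy)=y<sy$) gives $w'\le sy$, so $sy$ is an $s$-descending element of $(w',w)$ mapping back to $y$. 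This makes your map an honest bijection onto $(sw',sw)\setminus\{w'\}$, and since $sw\in(w',w)$ exactly when $w'\in(sw',sw)$, the two corrections cancel and the count is exactly $2$, whether or not $w'\le sw$ (both situations occur, e.g.\ $w'=s_1$, $w=s_1s_2s_3$, $s=s_1$ in type $A_3$ gives $w'\not\le sw$). The trade-off: your proof is self-contained at the level of Coxeter combinatorics but leans on two nontrivial standard inputs (subword property and lifting property), whereas the paper simply treats the square property as a black box from \cite{BGG}; including your argument would make the background section independent of that reference at the cost of importing those two standard lemmas.
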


\begin{proposition}[\cite{BGG}]\label{prop:splitsquare}
Let $\alpha$ be a simple root, $\beta$ be a positive root and $x,y \in W$. The first diagram exists if and only if the second diagram does: 

\[ \xymatrix{
    & y   &   & y \ar[rd]^{\alpha}  & \\
      s_{\alpha}x \ar[ru]^{\beta} \ar[rd]^{\alpha}    &  & &  &  s_{\alpha}y \\
      & x & & x \ar[ru]^{s_{\alpha}\beta}
    }\] 

\end{proposition}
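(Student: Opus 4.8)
The plan is to work entirely inside the Weyl group, translating the diagrams into statements about lengths and the Bruhat covering relation. Recall that an arrow $u \xrightarrow{\gamma} v$ with $\gamma$ a positive root means $v = s_\gamma u$ and $\ell(v) = \ell(u)+1$, and that the arrow labels in the diagrams record \emph{which} reflection realizes the cover. So the left diagram asserts: $x = s_\alpha(s_\alpha x)$ with $\alpha$ simple (automatic), $y = s_\beta(s_\alpha x)$, and $\ell(y) = \ell(s_\alpha x)+1 = \ell(x)+1$. The right diagram asserts: $s_\alpha y = s_\alpha \cdot y$ with the label $\alpha$, $s_\alpha y = s_{s_\alpha \beta} x$ (since $s_\alpha s_\beta s_\alpha = s_{s_\alpha\beta}$), and $\ell(s_\alpha y) = \ell(y)+1 = \ell(x)+1$. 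The key identity linking the two pictures is $s_\alpha \cdot s_\beta = s_{s_\alpha \beta} \cdot s_\alpha$ as elements of $W$, which is just the conjugation formula $s_\alpha s_\beta s_\alpha^{-1} = s_{s_\alpha \beta}$ rearranged.

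First I would set $z := s_\alpha x$ and observe that the left diagram is equivalent to the data $(z, x, y)$ with $x = s_\alpha z$, $y = s_\beta z$, and both covers $z \to x$, $z \to y$ holding; in particular $x \ne y$, so $s_\alpha z \ne s_\beta z$, i.e. $\beta \ne \alpha$. Then applying $s_\alpha$ to the chain $z \xrightarrow{\beta} y$ and using the conjugation identity, I get $s_\alpha z = x$ mapped to $s_\alpha y = s_\alpha s_\beta z = s_{s_\alpha\beta} s_\alpha z = s_{s_\alpha \beta} x$. So the edge $x \xrightarrow{s_\alpha \beta} s_\alpha y$ is forced, provided I can control the length: I need $\ell(s_\alpha y) = \ell(x) + 1$. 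This is where the simplicity of $\alpha$ enters crucially. Since $\alpha$ is simple, left multiplication by $s_\alpha$ changes length by exactly $\pm 1$, and the two reflections $s_\alpha$ (on the left of $z$, giving $x$) and $s_\beta$ (giving $y$) produce $x,y$ both of length $\ell(z)+1$; I would argue that $\ell(s_\alpha y)$ cannot be $\ell(y) - 1 = \ell(z)$ because then $s_\alpha y = z$ would force $s_\alpha s_\beta z = z$, i.e. $s_\beta = s_\alpha$, contradicting $\beta \ne \alpha$. Hence $\ell(s_\alpha y) = \ell(y)+1 = \ell(x)+1$, and the edges $y \xrightarrow{\alpha} s_\alpha y$ and $x \xrightarrow{s_\alpha \beta} s_\alpha y$ both hold, giving the right diagram. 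The converse direction is symmetric: apply $s_\alpha$ to the right diagram, use $s_\alpha(s_\alpha\beta) = \beta$ and $s_\alpha$ simple again, and recover the left diagram.

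The main obstacle I expect is bookkeeping the length/covering conditions carefully enough to be sure the produced edges are genuine Bruhat covers and not "reversed" ones — i.e., ruling out the degenerate case where applying $s_\alpha$ collapses an arrow. The essential input that makes this work, and the place where "$\alpha$ simple" is not optional, is that $s_\alpha$ (simple) perturbs length by $\pm 1$, so one only has to exclude a single unwanted value; for a non-simple reflection the length could jump by more and the argument would fail. A clean way to package all of this is via the standard criterion $\ell(s_\gamma w) < \ell(w) \iff w^{-1}(\gamma) < 0$ for a positive root $\gamma$: one translates every arrow into a sign condition on a root, applies $s_\alpha$, and checks the signs transform correctly, with the simple-root hypothesis guaranteeing $s_\alpha$ permutes the positive roots other than $\alpha$. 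I would also remark that Proposition~\ref{prop:bruhatsquare} is not needed here; this is a purely local statement about a single "kite" of four elements, used later precisely to analyze how squares interact with the simple reflections generating the resolution's differential.
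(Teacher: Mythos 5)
The paper itself offers no proof of Proposition~\ref{prop:splitsquare} (it is quoted from \cite{BGG}), so your attempt has to stand on its own. Your reductions are all correct and economical: since $s_\alpha s_\beta s_\alpha=s_{s_\alpha\beta}$, the group-theoretic identities in both diagrams are equivalent to the single relation $y=s_\beta s_\alpha x$; since $\alpha$ is simple and $\beta\neq\alpha$, the root $s_\alpha\beta$ is positive; and once $\ell(s_\alpha y)=\ell(y)+1$ is known, the remaining cover $x\to s_\alpha y$ follows from $\ell(y)=\ell(x)$. So everything hinges, as you say, on showing $\ell(s_\alpha y)=\ell(y)+1$. But exactly there your argument has a genuine gap: you rule out $\ell(s_\alpha y)=\ell(y)-1=\ell(z)$ ``because then $s_\alpha y=z$''. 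That inference is a non sequitur --- an element of length $\ell(z)$ need not equal $z$; nothing you have written forces $s_\alpha y$ to be comparable to, let alone equal to, $z=s_\alpha x$. The whole content of the proposition is concentrated in this step, and a length count alone cannot settle it.

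The missing ingredient is a Bruhat-order comparability statement: assuming $s_\alpha y<y$, use the lifting property (Deodhar's Property Z; Bj\"orner--Brenti, Prop.~2.2.7), or equivalently the subword/strong exchange argument, applied to the cover $z<y$ together with $s_\alpha z>z$ and $s_\alpha y<y$. This yields $z\le s_\alpha y$ (equivalently $s_\alpha z\le y$), and only then do equal lengths force $s_\alpha y=z$ (equivalently $x=y$), contradicting $\beta\neq\alpha$. Concretely: pick a reduced word for $y$ beginning with $s_\alpha$; by the subword property $z$ is obtained by deleting one letter, and if the deleted letter is not the initial $s_\alpha$ then $s_\alpha z<z$, contradiction, so $z=s_\alpha y$. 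Your closing suggestion to package everything through the sign criterion $\ell(s_\gamma w)<\ell(w)\iff w^{-1}\gamma<0$ does not repair this: the criterion records only whether a reflection raises or lowers length, so it cannot encode the cover condition for the non-simple reflection $s_\beta$, and the needed implication (from $z^{-1}\alpha>0$ and the cover $z\to y$ to $y^{-1}\alpha>0$) is not a formal sign manipulation --- it is precisely the lifting property again. The same gap recurs in the converse direction, which is otherwise correctly described as dual. With the lifting property inserted at that point, and the implicit nondegeneracy $\beta\neq\alpha$ (i.e.\ $x\neq y$) made explicit --- the equivalence genuinely fails when $\beta=\alpha$ --- your outline becomes a complete, standard proof of the statement the paper cites from \cite{BGG}; and you are right that Proposition~\ref{prop:bruhatsquare} is not needed for it.
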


\noindent We also require the following lemma: 

\begin{lemma}\label{prop:simplerootsquare}
For each edge $x \overset{t}{\to} w$, there is a square $(w',x,y,w)$ and a simple reflection $s$ so that either $w = sx$ or $w = sy$.
\end{lemma}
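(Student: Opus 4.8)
The plan is to reduce everything to a reduced expression of $w$. Since $x\to w$ we have $\ell(w)=\ell(x)+1\ge 1$, so $w\neq e$; I would fix a reduced expression $w=s_{i_1}\cdots s_{i_k}$ and set $s\coloneqq s_{i_1}$, a simple reflection with $\ell(sw)=\ell(w)-1$. By the subword property of the Bruhat order, since $x\to w$ there is an index $j$ such that deleting the $j$-th letter of $s_{i_1}\cdots s_{i_k}$ yields a reduced word for $x$. The proof then splits according to whether $j=1$ or $j\ge 2$ --- which, conceptually, is nothing but the two branches of the lifting property of the Bruhat order applied to $x\le w$ and the left descent $s$ of $w$.

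If $j=1$, then $x=s_{i_2}\cdots s_{i_k}=sw$, so $w=sx$ with $s$ simple; this already provides the required relation, and it remains only to exhibit \emph{some} square through the edge $x\to w$. Assuming $\ell(w)\ge 2$ (the case $\ell(w)=1$ being vacuous, as no square contains such an edge) we have $x\neq e$, so I would pick any $w'\to x$; then $w'\to x\to w$, and Proposition~\ref{prop:bruhatsquare} supplies the second element $y$ with $w'\to y\to w$, giving the square $(w',x,y,w)$.

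If $j\ge 2$, the reduced word for $x$ begins with $s$, so $\ell(sx)=\ell(x)-1$ and $sx=s_{i_2}\cdots\widehat{s_{i_j}}\cdots s_{i_k}$, which is a subword, one letter shorter, of the reduced word $sw=s_{i_2}\cdots s_{i_k}$; hence $sx\le sw$ and, by the length count, $sx\to sw$. I would then set $w'\coloneqq sx$ and $y\coloneqq sw$: together with $sx\to x$, the given $x\to w$, and $sw\to w$ we get $w'\to x\to w$ and $w'\to y\to w$, and since $x\neq sw$ (otherwise $sx=w$, contradicting $\ell(sx)=\ell(w)-2$) Proposition~\ref{prop:bruhatsquare} guarantees that $(w',x,y,w)$ is a square. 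Finally $w=s(sw)=sy$.

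I expect the only genuinely delicate point to be the bookkeeping in the case $j\ge 2$: confirming that $sx\to sw$ is a true covering relation and that $x\neq sw$, so that $(sx,x,sw,w)$ is a bona fide square in the sense of Proposition~\ref{prop:bruhatsquare} and not a degenerate configuration; the length identities above take care of this. The degenerate boundary case $\ell(w)=1$ (forcing $x=e$, with no square available) is the only other thing to keep in mind, and is harmless for the applications of the lemma.
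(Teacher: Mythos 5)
Your argument is correct, and it ultimately produces the very same square as the paper's proof: for a left descent $s$ of $w$ (your $s=s_{i_1}$, the paper's $s_\alpha$) the square is $(sx,\,x,\,sw,\,w)$ with $w=s\cdot(sw)$ given by a simple reflection. The genuine difference is the tool used to certify the two covering relations $sx\to x$ and $sx\to sw$: the paper invokes Proposition \ref{prop:splitsquare} with $\beta=s_\alpha\gamma$ (where $\gamma$ is the root labelling $x\to w$), whereas you argue directly from a reduced word of $w$ and the subword/lifting property of the Bruhat order, which makes the step self-contained at the cost of re-deriving, in effect, a special case of that proposition. What your case split $j=1$ versus $j\geq 2$ buys is a clean treatment of the degenerate situation $x=sw$: there $\gamma=\alpha$, so the paper's choice $\beta=s_\alpha\gamma=-\alpha$ is not a positive root and Proposition \ref{prop:splitsquare} does not literally apply --- a point the published sketch passes over --- while you correctly note that in this case $w=sx$ is already simple and any square through the edge, supplied by Proposition \ref{prop:bruhatsquare} once one picks some $w'\to x$, suffices. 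Both arguments share the boundary caveat you flag: if $\ell(w)=1$ there is no square through the edge at all, so the lemma holds only in the form actually needed by the algorithm (such an edge is simple and requires no square); note also that the relevant dichotomy is $x=sw$ versus $x\neq sw$ rather than $w=w_0$ versus $w\neq w_0$, since a left descent of $w$ exists for every $w\neq e$, so your organization of the cases is, if anything, tidier than the paper's.
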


\begin{proof}
The lemma is obvious if $w = w_0$ so we assume $w \neq w_0$. Since we assumed $w \neq w_0$ there is a simple root $\alpha$ so that $y:= s_{\alpha} w \to w$. Taking $\beta = s_{\alpha} \gamma$ (where $\gamma$ is the positive root corresponding to $t$) proves the existence of the right part of the diagram in \ref{prop:splitsquare}, and the proposition finishes the proof. 
\end{proof}

Finally we recall the BGG Theorem, due to Bernstein-Gelfand-Gelfand  and Rocha-Caridi : 

\begin{theorem}\label{BGGres}\cite{BGG}\cite{rocha}
Let $\lambda \in P^+$, then there is an exact sequence 
\[
    0 \to M(w_0 \cdot \lambda ) \to  \dots \to \bigoplus_{\ell(w) = k} M(w \cdot \lambda) \to \dots \to M(\lambda) \to L(\lambda) \to 0
\]
\end{theorem}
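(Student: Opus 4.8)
The plan is to build the complex by hand, verify $\delta^2 = 0$ and $\varepsilon\delta_1 = 0$ from the combinatorics recalled above, and then prove exactness, which is the only genuine difficulty.

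\textbf{Construction of the differential.} By Proposition~\ref{prop1}, for each edge $x \to w$ of the Bruhat order (so $x < w$, $\ell(w) = \ell(x)+1$) there is an embedding $\varphi_{w,x}\colon M(w\cdot\lambda)\hookrightarrow M(x\cdot\lambda)$, unique up to a nonzero scalar. First I would normalize these so that around every square $(w',x,y,w)$ one has $\varphi_{x,w'}\varphi_{w,x} = \varphi_{y,w'}\varphi_{w,y}$, which is possible because $\Hom_\g(M(w\cdot\lambda),M(w'\cdot\lambda))$ is nonzero (Proposition~\ref{prop1}, as $w' < w$) and one-dimensional. Next I would construct a sign function assigning $\epsilon(x,w)\in\{\pm1\}$ to each edge with $\epsilon(w',x)\epsilon(x,w) = -\epsilon(w',y)\epsilon(y,w)$ on every square. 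This is the combinatorial core, and it is exactly what Proposition~\ref{prop:bruhatsquare}, Proposition~\ref{prop:splitsquare} and Lemma~\ref{prop:simplerootsquare} are set up to provide: one defines $\epsilon$ by induction on $\ell(w)$, using Lemma~\ref{prop:simplerootsquare} to pass through an edge of the form $w = sx$ or $w = sy$ with $s$ a simple reflection, and Proposition~\ref{prop:splitsquare} to check that the two descriptions of a square impose the same relation. Setting $\delta_k = \bigl(\epsilon(x,w)\,\varphi_{w,x}\bigr)_{x\to w}\colon \bigoplus_{\ell(w)=k} M(w\cdot\lambda) \to \bigoplus_{\ell(x)=k-1} M(x\cdot\lambda)$, the component of $\delta_{k-1}\delta_k$ from $M(w\cdot\lambda)$ to $M(w'\cdot\lambda)$ with $\ell(w')=\ell(w)-2$ is a sum over the \emph{exactly two} intermediate vertices supplied by Proposition~\ref{prop:bruhatsquare}, which form a square, so it vanishes by the sign condition; hence $\delta^2 = 0$.

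\textbf{Augmentation and reduction to a vanishing statement.} Let $\varepsilon\colon M(\lambda)\twoheadrightarrow L(\lambda)$ be the canonical projection onto the unique simple quotient. For each simple root $\alpha_i$, the image of $\varphi_{s_i,e}\colon M(s_i\cdot\lambda)\to M(\lambda)$ is the submodule generated by the singular vector of weight $s_i\cdot\lambda$, and $\ker\varepsilon$ is generated by precisely these singular vectors; thus $\im\delta_1 \subseteq \ker\varepsilon$, the augmented complex is well defined, and $\varepsilon$ induces a surjection $H_0 \twoheadrightarrow L(\lambda)$. The Weyl character formula gives $\sum_k (-1)^k \ch C_k = \ch L(\lambda)$, and since each weight space of each $C_k$ is finite-dimensional, once $H_k = 0$ for all $k \geq 1$ this forces $\ch H_0 = \ch L(\lambda)$, which together with the surjection $H_0 \twoheadrightarrow L(\lambda)$ yields $H_0 = L(\lambda)$ and $\im\delta_1 = \ker\varepsilon$. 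So everything reduces to exactness in positive degrees.

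\textbf{Exactness.} Here I would pass through the \emph{weak} BGG resolution. The relative Koszul (Chevalley--Eilenberg) construction for the pair $(\g,\b)$ yields an exact complex of $\g$-modules $0 \to N_n \to \cdots \to N_0 \to L(\lambda) \to 0$ in which $N_k$ carries a Verma flag whose subquotients are the Verma modules $M(\mu+\lambda)$ for $\mu$ a weight of $\Lambda^k(\g/\b)$, counted with multiplicity. Projecting onto the block of category $\O$ with the central character of $\lambda$ is exact and kills every subquotient except those with $\mu + \lambda \in W\cdot\lambda$; a Kostant-type computation of the weights of $\Lambda^\bullet(\g/\b)$ shows the survivors are precisely $\mu = w\cdot\lambda - \lambda$ with $\ell(w) = k$, each with multiplicity one. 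Using that $\mathrm{Ext}^1_\O(M(w\cdot\lambda), M(w'\cdot\lambda))$ vanishes unless $w' < w$, and ordering the flag by length, one shows the projected term splits as $\bigoplus_{\ell(w)=k} M(w\cdot\lambda)$. Finally, since $\dim\Hom_\g(M(w\cdot\lambda),M(x\cdot\lambda)) \leq 1$ by Proposition~\ref{prop1}, this abstract exact complex must be isomorphic as a complex to the one constructed above, so the latter is exact. \textbf{The main obstacle} is this last step: everything else is formal or combinatorial, whereas identifying the terms of the block-projected weak resolution requires both the Kostant multiplicity statement and enough control of extensions among Verma modules in a single block to conclude that the relevant filtration actually splits. (An alternative is to prove exactness by induction on the number of positive roots using the translation/wall-crossing functor attached to a simple reflection $s_\alpha$, where Lemma~\ref{prop:simplerootsquare} is exactly what makes such a functor compatible with $\delta$; then the inductive bookkeeping becomes the delicate point.)
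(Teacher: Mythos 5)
Your overall strategy is the one the paper itself sketches (weak relative-Koszul resolution, truncation to the block of $\lambda$, a Verma flag with each $M(w\cdot\lambda)$ occurring once, and the Rocha-Caridi splitting), but the key exactness step is stated incorrectly for $\lambda\neq 0$, and as written it fails. There is no exact complex $0\to N_n\to\cdots\to N_0\to L(\lambda)\to 0$ whose $k$-th term has a Verma flag with subquotients $M(\mu+\lambda)$, $\mu\in\wt(\wedge^k(\g/\b))$: taking Euler characteristics, such a complex would force $\ch L(\lambda)=\sum_k(-1)^k\sum_{\mu\in\wt(\wedge^k(\g/\b))}\ch M(\lambda+\mu)$, and already for $\sl_2$ with $m=\langle\lambda,\alpha^\vee\rangle>0$ the right-hand side is $\ch M(\lambda)-\ch M(\lambda-\alpha)=e^{\lambda}\neq \ch L(\lambda)$. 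Equivalently, the twisted complex $\text{Ind}_{U(\b)}^{U(\g)}\bigl(\wedge^{\bullet}(\g/\b)\otimes\C_{\lambda}\bigr)$ reads $0\to M(\lambda-\alpha)\to M(\lambda)\to L(\lambda)\to 0$ in rank one, while $\ker\bigl(M(\lambda)\to L(\lambda)\bigr)=M(s\cdot\lambda)=M(\lambda-(m+1)\alpha)$. The subsequent claim that block projection leaves exactly the weights $\mu=w\cdot\lambda-\lambda$ with $\ell(w)=k$ is false for the same reason: $w\cdot\lambda-\lambda=w(\lambda+\rho)-(\lambda+\rho)$ is not a sum of $\ell(w)$ distinct roots unless $\lambda=0$ (e.g.\ $s\cdot\lambda-\lambda=-(m+1)\alpha$), so for $\lambda\neq 0$ the needed Vermas do not even occur in your complex. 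Your construction-of-signs paragraph and the Ext-vanishing splitting argument (nonzero $\mathrm{Ext}^1$ between Vermas in the block forces a strict Bruhat relation, hence vanishes between Vermas of equal length) are fine; the latter is exactly the Rocha-Caridi ingredient the paper cites.

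There are two standard repairs, and the paper takes the first: prove the theorem for $\lambda=0$, where your description of the Koszul terms and the Kostant-type multiplicity-one count are exactly correct, and then transport the resolution to $L(\lambda)$ by the translation functor $T_0^{\lambda}$, which is exact and sends $M(w\cdot 0)$ to $M(w\cdot\lambda)$ and $L(0)$ to $L(\lambda)$. Alternatively, keep $\lambda$ general but take as weak resolution the tensor product of the standard resolution of the trivial module with the finite-dimensional module $L(\lambda)$; by the tensor identity its $k$-th term is $\text{Ind}_{U(\b)}^{U(\g)}\bigl(\wedge^{k}(\g/\b)\otimes L(\lambda)\bigr)$, whose Verma subquotients are $M(\mu+\nu)$ with $\nu\in\wt(L(\lambda))$, and the multiplicity-one statement becomes the two-variable lemma that $\mu+\nu\in W\cdot\lambda$ forces $\nu=w\lambda$ and $\mu=w\rho-\rho$ for a unique $w$ with $\ell(w)=k$. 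Finally, your closing identification (``the split exact complex must be isomorphic to the signed complex built at the start'') silently uses that every edge component of the differential of the split complex is nonzero; this nonvanishing is itself a lemma of \cite{BGG}, and the paper deals with the point separately via Theorem \ref{signs} and the uniqueness result of \cite{mm}. Since the theorem as stated does not specify the differentials, you could also omit the explicit construction and the character-formula reduction altogether once the repaired weak-resolution argument is in place.
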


\noindent We briefly sketch the proof. See \cite{humphreys} for a more complete sketch, and \cite{BGG} and \cite{rocha} for a complete proof. 

\begin{proof}[Proof (sketch)] Using translation functors one can assume that $\lambda = 0$. Then the complex $D_k := \text{Ind}_{U(\b)}^{U(\g)} (\bigwedge^k(\g/\b))$ resolves $L(0)$. Next, the subcomplex $C_k := D_k^{\chi_0}$ (i.e taking the part where the center acts by zero) is still exact. The main point is that each $C_k$ has a filtration where each Verma module appears exactly once. It is shown in \cite{rocha} that such a filtration necessary splits, giving the BGG theorem. 
\end{proof}

\subsection{Maps in the BGG complex}

In light of proposition \ref{prop1} and \ref{BGGres}, the maps in the BGG complex exactly correspond to pairs $(x,w)$ with $x,w\in W$, such that $x \to w$, i.e there is a reflection $t \in W$ so that $w = tx$ and $\ell(w) = \ell(x)+1$. Hence, the maps in the BGG complex exactly corresponds to the edges of the Bruhat graph $\mathscr B$ of the corresponding Weyl group $W$. 

We recall the definition of the Bruhat graph: the vertices of $\mathscr B$ are given by $W$, and there is an edge from $x$ to $y$ if and only if $x \to w$, i.e there is a reflection $t \in W$ with $w = tx$ and $\ell(w) = \ell(x) + 1$. For example, for $\sl_3$ the BGG complex associated to $\lambda$ is represented by the following diagram:

\[
    \begin{tikzcd}[column sep=3em]
        & M(s_1s_2 \cdot \lambda) \ar[r,"s_1s_2s_1"] \ar[rdd,"s_2" description, near start]& M(s_1 \cdot \lambda ) \ar[rd,"s_1"]  &  &   \\
        M(s_1s_2s_1 \cdot \lambda )  \ar[rd,"s_1"] \ar[ru,"s_2"] &  &  & M(\lambda) \to L(\lambda)  \\
        & M(s_2s_1 \cdot \lambda ) \ar[r, "s_2s_1s_2"'] \ar[ruu,"s_1" description, near start] & M(s_2 \cdot \lambda) \ar[ru,"s_2"] & &
    \end{tikzcd}
\]

Here each column represent a term of the BGG complex, and each edge $\sigma(x,w)$ corresponds to an element $t \in W$ so that $tx = w$. For example, for the lower horizontal arrow we have $x = s_2, w = s_2s_1$ and $t = s_2s_1s_2$.

Now let us describe in more detail the maps associated to edges in the Bruhat graph. A non-zero map $M(w \cdot \lambda ) \to M(x \cdot \lambda)$ is injective (again see \cite{humphreys}), so we can write $M(w \cdot \lambda) = U(\n)u$ where $u$ is a highest weight vector of weight $w \cdot \lambda$. The map $M(x \cdot \lambda) \to M(w \cdot \lambda)$ is determined by the image of $u$, which is a highest weight vector of weight $w \cdot \lambda - x \cdot \lambda$, so it can be written as $\mathcal{F} u$ for a unique $\mathcal{F} := \mathcal{F}(x,w) \in U(\n)[{x \cdot \lambda - w \cdot \lambda}]$. We emphasize that $\mathcal{F}(x,w)$ depends on $\lambda$. Explicitly computing all these elements $\mathcal{F}(x,w)$ is an important part of the algorithm. An easy case is when $w \cdot \lambda - x \cdot \lambda$ is a multiple of a simple root $\alpha_i$ (i.e when the element $t \in W$ associated to the edge $\sigma(x,w)$ correspond to a simple reflection). In this case, the element $\mathcal F$ will simply be a scalar multiple of $f_i^{1 + \langle w \cdot \lambda , \alpha_i^{\vee} \rangle }$.  

Now assume that we found all the elements $\mathcal{F}(x,w)$. These elements are well-defined up to scalar multiple. To obtain a complex, we should pick scalars for each edges to ensure the equation $d^2 = 0$. In view of propositions \ref{prop1} and \ref{prop:bruhatsquare}, to check that $d^2 = 0$ it is enough to check it `square-wise'. That is, for each square $w \to x \to w', w \to y \to w'$ we require the equality $\mathcal{F}(w,x)\mathcal{F}(x,w') + \mathcal{F}(w,y)\mathcal{F}(y,w') = 0$. In practice it is easier to solve the equation $\mathcal{F}(w,x)\mathcal{F}(x,w') = \mathcal{F}(w,y)\mathcal{F}(y,w')$ and then assign signs $\sigma(w,w')$ to each edge so that $\sigma(w,x)\sigma(x,w') + \sigma(w,y)\sigma(y,w') = 0$. Note that then the maps $\sigma(x,w)\mathcal{F}(x,w)$ will form a complex. The two following results ensure that the BGG resolution is in fact exact, and moreover unique:

\begin{theorem}\label{signs}\cite{BGG}
For any choice of scalars such that $d^2 = 0$, the corresponding BGG complex is exact. 
\end{theorem}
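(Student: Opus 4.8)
The plan is to deduce exactness of an arbitrary valid BGG complex from exactness of the single one furnished by Theorem~\ref{BGGres}, by exhibiting an explicit isomorphism of complexes between the two. Fix, for each edge $x\to w$ of the Bruhat graph, a nonzero embedding $\mathcal F(x,w)$ spanning the one‑dimensional space $\Hom_\g\big(M(w\cdot\lambda),M(x\cdot\lambda)\big)$ (one‑dimensional and nonzero by Proposition~\ref{prop1}, since $x<w$ and $\lambda$ is dominant). A differential of the prescribed shape on $C_\bullet=\bigoplus_k\bigoplus_{\ell(w)=k}M(w\cdot\lambda)$ is then exactly a choice of nonzero scalars $\sigma(x,w)$, one per edge. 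By Proposition~\ref{prop:bruhatsquare} the composite $d^2$ is a sum of two‑term contributions over squares, each landing in the one‑dimensional space $\Hom_\g\big(M(w\cdot\lambda),M(w'\cdot\lambda)\big)$, so $d^2=0$ is equivalent to
\[
\sigma(w',x)\,\sigma(x,w)\,\mu_{xw}+\sigma(w',y)\,\sigma(y,w)\,\mu_{yw}=0\qquad\text{for every square }(w',x,y,w),
\]
where $\mu_{xw},\mu_{yw}\in\C^\times$ express $\mathcal F(w',x)\mathcal F(x,w)$ and $\mathcal F(w',y)\mathcal F(y,w)$ in terms of a fixed generator of that Hom‑space; crucially the $\mu$'s depend only on the chosen family of $\mathcal F$'s, not on $\sigma$.

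Now let $d^{\BGG}$, with scalars $\sigma^{\BGG}(x,w)$, be the exact differential of Theorem~\ref{BGGres}, and let $d$, with scalars $\sigma(x,w)$, be an arbitrary one satisfying $d^2=0$. Put $\tau(x,w):=\sigma(x,w)/\sigma^{\BGG}(x,w)\in\C^\times$. Dividing the square relation for $\sigma$ by the one for $\sigma^{\BGG}$ (the $\mu$'s cancel) yields $\tau(w',x)\tau(x,w)=\tau(w',y)\tau(y,w)$ for every square. I would then produce a function $c\colon W\to\C^\times$ with $\tau(x,w)=c_w/c_x$ on every edge. Given such a $c$, the graded endomorphism $\phi$ of $C_\bullet$ acting by $c_w$ on $M(w\cdot\lambda)$ is invertible and satisfies $\phi\circ d=d^{\BGG}\circ\phi$ — on the $(x,w)$‑component both sides equal $c_x\sigma(x,w)\mathcal F(x,w)=c_w\sigma^{\BGG}(x,w)\mathcal F(x,w)$ — so $(C_\bullet,d)\cong(C_\bullet,d^{\BGG})$ as complexes and exactness transfers. (This is essentially the only comparison map one can try: since $\mathrm{End}_\g M(w\cdot\lambda)=\C$ and $\Hom_\g(M(w\cdot\lambda),M(w'\cdot\lambda))=0$ when $\ell(w)=\ell(w')$ and $w\ne w'$, every graded automorphism of $C_\bullet$ is diagonal of this form.)

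To build $c$ I would induct on $\ell(w)$: set $c_e=1$ and, having defined $c$ in lower degrees, pick any $x\to w$ and put $c_w:=c_x\,\tau(x,w)$. Well‑definedness reduces to checking $c_x\tau(x,w)=c_{x'}\tau(x',w)$ for any two covers $x,x'$ of $w$. When $x$ and $x'$ are the two middle elements of a common square $(w',x,x',w)$ this is immediate from the square relation on $\tau$ together with $c_x=c_{w'}\tau(w',x)$ and $c_{x'}=c_{w'}\tau(w',x')$; in general one links $x$ to $x'$ through a chain of covers of $w$, consecutive ones square‑adjacent in this sense. That such a chain always exists — equivalently, that squares generate all cycles of the Bruhat graph — is the heart of the matter, and I expect it to be the main obstacle. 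I would prove it by downward induction on the interval $[e,w]$, feeding in Propositions~\ref{prop:bruhatsquare} and~\ref{prop:splitsquare} and Lemma~\ref{prop:simplerootsquare} — the last supplying, for a given edge, a square together with a simple reflection carrying one of its middle elements onto $w$, which is what makes the induction turn — or else invoke the known fact that open Bruhat intervals are homotopy spheres, hence simply connected once $\ell(w)\ge 3$; see~\cite{BGG}.

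Everything else is formal. As a partial check, exactness at the two ends holds for \emph{any} nonzero scalars, independently even of $d^2=0$: at the top, $d$ restricts to a nonzero, hence injective, map out of $M(w_0\cdot\lambda)$, which is simple; and the image of $d\colon C_1\to C_0$ is the sum of the images of the $M(s_i\cdot\lambda)$ inside $M(\lambda)$, a sum of submodules that does not depend on the scalars, equal to the maximal submodule of $M(\lambda)$, i.e.\ the kernel of $M(\lambda)\twoheadrightarrow L(\lambda)$.
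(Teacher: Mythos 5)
Your overall strategy---transferring exactness from the resolution of Theorem~\ref{BGGres} to an arbitrary admissible choice of scalars via a diagonal rescaling isomorphism---is legitimate; it amounts to proving the paper's subsequent uniqueness theorem (cited to \cite{mm}) and deducing Theorem~\ref{signs} as a corollary, whereas the paper gives no proof at all and the source \cite{BGG} establishes exactness directly from the construction. However, your argument has a genuine gap exactly at the point you yourself flag as the heart of the matter: the claim that the square relations $\tau(w',x)\tau(x,w)=\tau(w',y)\tau(y,w)$ force $\tau(x,w)=c_w/c_x$ for some $c\colon W\to\C^\times$. This is equivalent to saying that the squares generate all cycles of the covering graph, i.e.\ that the $2$-complex with vertex set $W$, covering edges, and squares as $2$-cells is simply connected. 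You do not prove this: the inductive step ``link $x$ to $x'$ through a chain of covers of $w$, consecutive ones square-adjacent'' is precisely the assertion to be established (Lemma~\ref{prop:simplerootsquare} produces one square through a given edge, not connectivity of the family of all squares over $w$), and the topological fact you invoke---that open Bruhat intervals are spheres---concerns the order complex of chains, not the covering graph with square $2$-cells, so it does not directly yield the generation-of-cycles statement without a further argument (e.g.\ via the regular CW/thinness structure of Bruhat intervals). As it stands, the load-bearing combinatorial lemma is asserted, not proved.

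There is also a second, unacknowledged gap: to define $\tau(x,w)=\sigma(x,w)/\sigma^{\BGG}(x,w)$ you need the exact differential supplied by Theorem~\ref{BGGres} to have a \emph{nonzero} component on every edge of the Bruhat graph. Theorem~\ref{BGGres} as stated only asserts the existence of an exact sequence with the given terms; Proposition~\ref{prop1} guarantees each component is a multiple of $\mathcal F(x,w)$, but possibly the zero multiple. That all components are in fact nonzero is true, but it is part of the finer output of the BGG/Rocha-Caridi construction (or requires a separate argument showing exactness fails if some edge map vanishes), so it must be quoted or proved before the rescaling comparison can start. Your endpoint checks (simplicity of $M(w_0\cdot\lambda)$, image of $C_1\to C_0$ being the maximal submodule) are fine, but the two bridging facts that carry all the weight are left unestablished.
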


\begin{theorem}\cite{mm}
Different choices of scalars give isomorphic complexes.
\end{theorem}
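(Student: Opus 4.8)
The plan is to build, for any two BGG complexes $C^\bullet$ and $\widetilde C^\bullet$ assembled from the same maps $\mathcal F(x,w)$ but from possibly different scalars, an explicit \emph{diagonal} isomorphism between them. Both complexes have the same terms $\bigoplus_{\ell(w)=k}M(w\cdot\lambda)$, and on the summand attached to a Bruhat edge $x\to w$ their differentials are $\sigma(x,w)\,\mathcal F(x,w)$ and $\widetilde\sigma(x,w)\,\mathcal F(x,w)$; here, by Proposition~\ref{prop1}, $\mathcal F(x,w)$ is canonical up to a nonzero scalar, so setting $r(x,w):=\widetilde\sigma(x,w)/\sigma(x,w)\in\C^\times$ captures the discrepancy on each edge. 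I would look for an isomorphism $\phi$ acting on $M(w\cdot\lambda)$ by a scalar $c_w\in\C^\times$; since the differentials have components only along Bruhat edges, the chain-map condition $\widetilde d\circ\phi=\phi\circ d$ decomposes edge-by-edge into the single scalar equation $c_x=r(x,w)\,c_w$, and any such $\phi$ is automatically invertible. Thus the theorem reduces to producing nonzero scalars $(c_w)_{w\in W}$ with $c_x=r(x,w)\,c_w$ on every edge.

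Next I would record the constraints on $r$. Fix a square $(w',x,y,w)$ (Proposition~\ref{prop:bruhatsquare}); by that proposition the only contributions of $d^2$ to the component $M(w\cdot\lambda)\to M(w'\cdot\lambda)$ come from the two length-two paths $w\to x\to w'$ and $w\to y\to w'$, and likewise for $\widetilde d^2$. Each composite is nonzero (a composite of embeddings), and the space of maps $M(w\cdot\lambda)\to M(w'\cdot\lambda)$ is one-dimensional by Proposition~\ref{prop1}; so $d^2=0$ forces these two composites to be negatives of one another, and the same holds for $\widetilde d$. Since $\widetilde\sigma=r\,\sigma$ edgewise, comparing the two relations yields the multiplicative flatness condition $r(w',x)\,r(x,w)=r(w',y)\,r(y,w)$ for every square.

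I would then construct $(c_w)$ by induction on $\ell(w)$, starting from $c_e:=1$. Having fixed the scalars (and checked all edge equations) among elements of length $<k$, for $w$ with $\ell(w)=k\ge 1$ choose some $x\lessdot w$ and set $c_w:=c_x/r(x,w)$; the content is independence of this choice. If $x_1,x_2\lessdot w$ lie in a common square $(w',x_1,x_2,w)$, the induction hypothesis on the edges $w'\to x_1$ and $w'\to x_2$ gives $r(w',x_1)\,c_{x_1}=c_{w'}=r(w',x_2)\,c_{x_2}$, and dividing this by the flatness relation of the square gives $c_{x_1}/r(x_1,w)=c_{x_2}/r(x_2,w)$, as required. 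It remains to see that any two elements covered by $w$ are linked by a chain of squares lying under $w$: for $\ell(w)=2$ this is immediate, since by Proposition~\ref{prop:bruhatsquare} applied with lower vertex $e$ there is a single square containing both lower covers of $w$; for $\ell(w)\ge 3$ it amounts to connectivity of the open Bruhat interval $(e,w)$, a standard fact about Bruhat order (its order complex is a sphere of dimension $\ell(w)-2$) which can also be obtained inductively from Propositions~\ref{prop:bruhatsquare}, \ref{prop:splitsquare} and Lemma~\ref{prop:simplerootsquare}. With $c_w$ well defined, every edge equation $c_x=r(x,w)\,c_w$ with $\ell(x)<\ell(w)$ holds by construction, completing the induction and hence the proof.

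The main obstacle is exactly this last combinatorial point — that the lower covers of a fixed $w$ are joined through squares beneath it; granting it, the remainder is formal, the only inputs being $\dim\Hom_{\g}(M(\mu),M(\nu))\le 1$ and the relations $d^2=\widetilde d^2=0$. A more homological alternative sidesteps the combinatorics: by Theorem~\ref{signs} both $C^\bullet$ and $\widetilde C^\bullet$ are resolutions of $L(\lambda)$ whose differentials have no isomorphism components (they are proper embeddings of Verma modules), so they are minimal complexes, and a standard lemma then shows any chain map over $\id_{L(\lambda)}$ between minimal complexes is an isomorphism; this, however, requires the existence of such a lift and hence some projectivity input, so it is not obviously shorter.
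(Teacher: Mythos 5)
The paper itself gives no proof of this statement --- it is quoted from \cite{mm} --- so your argument has to be judged on its own merits. Its architecture is the standard one and the algebraic part is correct: since each differential component along an edge $x\to w$ lies in the one-dimensional space $\Hom_{\g}(M(w\cdot\lambda),M(x\cdot\lambda))$ (Proposition~\ref{prop1}), one may look for a diagonal isomorphism $c_w\,\id$ on each $M(w\cdot\lambda)$, the chain-map condition becomes the edge equations $c_x=r(x,w)\,c_w$, and $d^2=\tilde d^2=0$ together with Proposition~\ref{prop:bruhatsquare} and the injectivity of Verma morphisms do give the multiplicative relation $r(w',x)\,r(x,w)=r(w',y)\,r(y,w)$ on every square. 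Your induction on $\ell(w)$ then works exactly as you say, \emph{provided} any two elements $x_1,x_2$ with $x_i\to w$ are joined by a chain of squares having top vertex $w$.

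That combinatorial step is the crux, and your justification of it is not right as stated. Linking the lower covers of $w$ through common elements of length $\ell(w)-2$ does not ``amount to connectivity of the open Bruhat interval $(e,w)$'': connectivity of the comparability graph of $(e,w)$ is strictly weaker than what you need, and in a general graded poset with the diamond property it does not imply that coatoms are joined through shared lower covers (one needs strong/dual connectivity, as for pseudomanifolds, not just connectedness of the order complex). What suffices is that any two maximal chains of $[e,w]$ are related by elementary square flips --- equivalently, that the squares span the cycle space of the Hasse diagram of $[e,w]$. This does follow from the Bj\"orner--Wachs sphericity/shellability you invoke (a PL sphere is a strongly connected pseudomanifold; tracking the penultimate chain element along a flip sequence produces the required square-path between coatoms), but that deduction has to be carried out, and your alternative claim that it can be obtained ``inductively from Propositions~\ref{prop:bruhatsquare}, \ref{prop:splitsquare} and Lemma~\ref{prop:simplerootsquare}'' is asserted without proof --- it is in effect the combinatorial core of the sign lemma in \cite{BGG}. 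The closing ``minimal complex'' alternative also needs an input you do not supply: a chain map over $\id_{L(\lambda)}$ is not automatic, since the Verma modules $M(w\cdot\lambda)$ with $w\neq e$ are not projective in category $\mathcal{O}$. In short: right strategy and correct algebra, but the decisive combinatorial lemma is mis-stated (connectivity of $(e,w)$ is not enough) and its proof is only gestured at.
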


So we can just pick an arbitrary choice of scalars such that $d^2 = 0$. Finally we explain how we compute the maps. For each square, we can recursively solve the equation $fg = hk$ where we know all but one map. We surely know the first column since each map correspond to a monomial. For any edge $t$ we can then find a square where the opposite edge is a simple reflection by lemma \ref{prop:simplerootsquare} (hence we know all but one map and can compute the map corresponding to $t$). Once we solved each system, we just distribute signs for each edge so that each signed square has $1$ or $3$ negative signs. A more precise description will be given in section~\ref{sec:Algorithm}. 

As an example we compute the first non-trivial map in the BGG resolution for $\g = \g_2$ and $\lambda = 0$ (note that the maps depend on the weights in general). We write $\alpha_1$ for the short root and $\alpha_2$ for the long root, and obtain $s_1 \cdot 0 = -\alpha_1, s_2 \cdot 0 = - \alpha_2, s_2s_1 \cdot 0 = - \alpha_1 - 2 \alpha_2$ and $s_1 s_2 \cdot 0 = - 4 \alpha_1 - \alpha_2$. Therefore the beginning of the complex is:

\[ \xymatrix{
     \ar[r]& M(-4 \alpha_1 - \alpha_2) \ar[r] \ar[rdd]& M(-\alpha_1) \ar[rd]  &  &   \\
      \dots   &  &  & M(0) \to L(0)  \\
      \ar[r]& M(- \alpha_1 - 2\alpha_2) \ar[r] \ar[ruu] & M(-\alpha_2) \ar[ru] & &
    }
\]

Write $M(0) = U(\n)v_0$ where $v_0 = 1$ and $M(-\alpha_1) = U(\n)v_{-\alpha_1}$. Looking at the weight, it is clear that the map $M(-\alpha_1) \to M(0)$ is given by $v_{-\alpha_1} \mapsto f_1v_0$, up to scaling. Similarly, the map $M(-4\alpha_1 - \alpha_2) \to M(-\alpha_2)$ is given by $v_{-4\alpha_1 - \alpha_2} \mapsto f_1^4 v_{-\alpha_2}$ up to scaling. Since we require the upper parallelogram to commute, we need to solve the equation $f_1^4f_2 = \mathcal F f_1$ where $\mathcal F$ is the unknown. For this particular case, the solution follows easily from the Serre relation $\ad(f_1)^4(f_2)=0$, giving $\mathcal F = 4f_1^3f_2 - 10f_1^2f_2f_1 + 4 f_1f_2f_1^2 - f_2f_1^3$. Similarly, the map $M(-\alpha_1 - \alpha_2) \to M(-\alpha_1)$ is obtained from the other Serre relation $\ad(f_2)^2(f_1) = 0$. A possible choice of signs for $\g_2$ is shown below (solid arrows correspond to $+$ and dashed arrows to $-$) :
\[ 
\xymatrix{ 
    & \bullet  \ar[r] \ar@{-->}[rdd] & \bullet \ar[r] \ar[rdd] & \bullet \ar[r] \ar@{-->}[rdd]&  \ar[r] \bullet \ar[rdd] & \ar@{-->}[rd] \bullet &\\ 
    \bullet \ar[ru] \ar[rd] & & & & & & \bullet  \\ 
    & \bullet  \ar[r] \ar@{-->}[ruu] & \bullet  \ar[r] \ar[ruu] & \bullet  \ar[r] \ar@{-->}[ruu] & \bullet  \ar[r] \ar[ruu] & \bullet \ar[ru] & 
}
\]

\section{Sheaf cohomology of flag varieties and the BGG complex}\label{sec:Flag}

Following \cite{LQ} we explain how to use the BGG resolution to compute the cohomology of flag varieties.

\subsection{Flag varieties}
\label{sec:FlagVar}

Let $G$ be a simple complex algebraic group, $B$ a Borel subgroup and $E$ a $B$-module. We consider the vector bundle $\mathcal E := G \times_B E$ on the flag variety $X = G/B$. Since $\mathcal E$ is $G$-equivariant, the cohomology groups $H^{\bullet}(G/B, G \times_B E)$ are naturally $G$-modules. In \cite{LQ} these $G$-modules are computed using the BGG complex. In this section we explain their method. To compute $H^{\bullet}(X, \mathcal E)$, we only need to compute  $\Hom_G(L(\lambda), H^{\bullet}(X, \mathcal E))$ for all $\lambda\in P^+$. To obtain a list of dominant weights $\lambda$ that might contribute to $H^{\bullet}(X, \mathcal E)$, we use a filtration of $E$ given by the $\n$-action. The composition factors $\n^kE/\n^{k+1}E$ are direct sums of $1$-dimensional weight spaces. It follows that $\mathcal E$ has a filtration with compositions factors isomorphic to direct sum of line bundles. At this point it is useful to recall the Borel-Weil-Bott theorem : 

\begin{definition}
We define $\mathscr L_{\lambda}$ be the homogeneous line bundle corresponding to the $1$-dimensional $B$-module $\C_{-\lambda}$, (so $H$ acts by the character $\chi_{-\lambda} : H \to \C$ and $U = B/T$ acts by the identity).
\end{definition}

\begin{remark}
The sign is just for convention, and makes the Borel-Weil-Bott theorem simpler to state.
\end{remark}

\begin{theorem}[Borel-Weil-Bott]
Assume $\lambda \in P$ is dot-singular, then $H^i(X,\mathscr L_{\lambda}) = 0$ for all $i \in \mathbb N$. If $\lambda$ is dot-regular, let $w \in W$ the unique element so that $w \cdot \lambda$ is dominant. Then, $H^i(X, \mathscr L_{\lambda}) = L(\lambda)$ if $i = \ell(w)$ and $0$ else. 
\end{theorem}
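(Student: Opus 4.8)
The plan is the Demazure-style argument: reduce the theorem to the Borel--Weil case (where $\lambda$ is already dominant) together with a ``wall--crossing'' step coming from the minimal parabolic $\mathbb P^1$--fibrations, and then iterate the wall--crossing step along a minimal gallery of Weyl chambers.

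For a simple root $\alpha_i$ let $P_i\supset B$ be the minimal parabolic and $\pi\colon X=G/B\to X_i:=G/P_i$ the projection, a Zariski--locally trivial $\mathbb P^1$--bundle with fibre $P_i/B\cong\mathbb P^1$. In the normalisation of the definition of $\mathscr L$, the restriction of $\mathscr L_\lambda$ to a fibre is $\O_{\mathbb P^1}(n)$ with $n=\langle\lambda,\alpha_i^\vee\rangle$, and the ranges $n\ge 0$, $n=-1$, $n\le -2$ correspond respectively to $\langle\lambda+\rho,\alpha_i^\vee\rangle>0$, $=0$, $<0$. I would first apply cohomology and base change: since $n$ is the same for every fibre, $R^0\pi_*\mathscr L_\lambda$ is locally free of rank $n+1$ with $R^1\pi_*\mathscr L_\lambda=0$ when $n\ge 0$; both vanish when $n=-1$; and $R^0\pi_*\mathscr L_\lambda=0$ with $R^1\pi_*\mathscr L_\lambda$ locally free of rank $-n-1$ when $n\le -2$. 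The Leray spectral sequence then has a single nonzero row, so $H^\bullet(X,\mathscr L_\lambda)\cong H^\bullet(X_i,R^0\pi_*\mathscr L_\lambda)$ in the first case, $H^\bullet(X,\mathscr L_\lambda)=0$ in the second, and $H^{j}(X,\mathscr L_\lambda)\cong H^{j-1}(X_i,R^1\pi_*\mathscr L_\lambda)$ in the third. The crux is to identify this last term: by relative Serre duality along $\pi$, whose relative dualizing sheaf $\omega_\pi$ restricts to $\O_{\mathbb P^1}(-2)$ on every fibre, $R^1\pi_*\mathscr L_\lambda\cong\bigl(\pi_*(\mathscr L_\lambda^\vee\otimes\omega_\pi)\bigr)^\vee$; unwinding this, using $s_i\cdot\lambda=\lambda-\langle\lambda+\rho,\alpha_i^\vee\rangle\alpha_i$ and the case $n\ge 0$ applied to $\mathscr L_{s_i\cdot\lambda}$ (note $\langle s_i\cdot\lambda,\alpha_i^\vee\rangle=-n-2\ge 0$), yields a $G$--equivariant isomorphism $H^{j}(X,\mathscr L_\lambda)\cong H^{j-1}(X,\mathscr L_{s_i\cdot\lambda})$ whenever $\langle\lambda+\rho,\alpha_i^\vee\rangle<0$. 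Together with the vanishing coming from the case $n=-1$, this is the wall--crossing step. I expect this identification to be the \emph{main obstacle}: it is where $\rho$ enters, and one must pin down the $\omega_\pi$--twist, the degree shift by one, and the $G$--equivariance.

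The base case is Borel--Weil: for $\lambda\in P^+$, identify $H^0(X,\mathscr L_\lambda)=\mathrm{Ind}_B^G\C_{-\lambda}=\{f\in\O(G)\mid f(gb)=\chi_{-\lambda}(b)^{-1}f(g)\}$ with $L(\lambda)$ by highest--weight theory and the algebraic Peter--Weyl theorem (this is what fixes the sign convention for $\mathscr L$), and prove $H^{>0}(X,\mathscr L_\lambda)=0$: by Kodaira vanishing when $\lambda$ is regular, and in general by descending along $\pi_\lambda\colon X\to G/P_\lambda$, where $P_\lambda$ is the parabolic to which $\lambda$ extends as a character, using $R^{>0}(\pi_\lambda)_*\O_X=0$ (the fibres are flag varieties of the Levi of $P_\lambda$) to reduce to the regular case on $G/P_\lambda$. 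This part is classical.

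Finally I would run the induction. If $\lambda$ is dot--regular, let $w$ be the unique element with $w\cdot\lambda\in P^+$, equivalently $w(\lambda+\rho)$ in the open dominant chamber; choose a minimal gallery of chambers from the one containing $\lambda+\rho$ to the dominant one. It crosses exactly $\ell(w)$ walls, the $t$--th crossing realised by a simple reflection $s_{i_t}$ applied to the current weight $\lambda_{t-1}$ (with $\lambda_0=\lambda$, $\lambda_t=s_{i_t}\cdot\lambda_{t-1}$, $\lambda_{\ell(w)}=w\cdot\lambda$) from the negative side, i.e.\ $\langle\lambda_{t-1}+\rho,\alpha_{i_t}^\vee\rangle<0$, and never $=0$ by regularity. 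Applying the wall--crossing step $\ell(w)$ times and then Borel--Weil gives $H^{j}(X,\mathscr L_\lambda)\cong H^{j-\ell(w)}(X,\mathscr L_{w\cdot\lambda})$, which is $L(w\cdot\lambda)$ for $j=\ell(w)$ and $0$ otherwise. If $\lambda$ is dot--singular, then $\lambda+\rho$ lies on some reflection hyperplane; walking it toward the dominant chamber by simple reflections as above, either some intermediate weight $\lambda'$ satisfies $\langle\lambda'+\rho,\alpha_i^\vee\rangle=0$, which forces $H^\bullet(X,\mathscr L_{\lambda'})=0$ (the case $n=-1$), or the walk reaches the closed dominant chamber where, still lying on a wall, $\lambda'+\rho$ satisfies $\langle\lambda'+\rho,\alpha_j^\vee\rangle=0$ for some simple $\alpha_j$, again giving $H^\bullet(X,\mathscr L_{\lambda'})=0$; since the intermediate maps are isomorphisms up to a degree shift, $H^\bullet(X,\mathscr L_\lambda)=0$.
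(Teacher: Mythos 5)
The paper itself offers no proof of this statement: Borel--Weil--Bott is quoted as classical background (no argument, not even a sketch, is given), so there is nothing internal to compare your proposal against; judged on its own, your sketch is the standard Demazure-style wall-crossing proof and it is sound over $\C$. The bookkeeping is right: with $n=\langle\lambda,\alpha_i^\vee\rangle$ one has $\langle\lambda+\rho,\alpha_i^\vee\rangle=n+1$, so the three fibre-degree cases match the sign of $\langle\lambda+\rho,\alpha_i^\vee\rangle$; the Leray collapse for the $\mathbb{P}^1$-bundle $G/B\to G/P_i$, the induction (equivalently on $\ell(w)$: pick at each stage a simple $\alpha_i$ with $\langle\lambda+\rho,\alpha_i^\vee\rangle<0$, which exists and drops the length by one), and the singular case (a singular weight in the closed dominant chamber is orthogonal to some \emph{simple} coroot, since each positive coroot is a nonnegative combination of simple ones) are all correct. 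The step you flag as the crux does close, and it is cleanest to bypass relative Serre duality on $X$ and argue fibrewise: $R^1\pi_*\mathscr L_\lambda$ and $\pi_*\mathscr L_{s_i\cdot\lambda}$ are the $G$-equivariant bundles on $G/P_i$ associated to the $P_i$-modules $H^1(P_i/B,\mathscr L_\lambda|_{P_i/B})$ and $H^0(P_i/B,\mathscr L_{s_i\cdot\lambda}|_{P_i/B})$; the unipotent radical of $P_i$ acts trivially on both (it acts trivially on $P_i/B$, and a line-bundle automorphism covering the identity of a connected projective variety is a scalar, so a unipotent group acts trivially), and as modules over the Levi both are irreducible of dimension $-n-1$ with the same extreme weights $s_i\cdot\lambda$ and $\lambda+\alpha_i$ --- this is exactly where the $\rho$-shift enters, and it gives the $G$-equivariant isomorphism your duality formula encodes. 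Two minor remarks: the paper's statement should read $L(w\cdot\lambda)$ in degree $\ell(w)$ (as your conclusion correctly produces; $L(\lambda)$ is undefined for non-dominant $\lambda$, and the paper uses the corrected form later when discussing dot-orbits of $\wt(E)$), and your use of Kodaira vanishing plus the Fano twist on $G/P_\lambda$ for the dominant base case is fine but ties the argument to characteristic zero, which is all this paper needs.
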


If $E$ was our initial homogeneous vector bundle filtered by direct sum of homogeneous line bundles, it follows that there is a surjection from the direct sum of cohomology groups of these line bundles onto $H^{\bullet}(X, \mathcal E)$. Hence, by the Borel-Weil-Bott theorem, the only representations that can appear in $H^{\bullet}(X, \mathcal E)$ are given by the dot-orbits of the set $\wt(E)$.  Hence it is natural to try to find the kernel of this surjection to obtain the cohomology $H^{\bullet}(X, \mathcal E)$, but it boils down to compute the map in the associated spectral sequence which is not trivial. As an alternative road, it is possible to use the BGG resolution to explicitly compute the cohomology as was done in \cite{LQ}. The main role is played by Bott's theorem relating the sheaf cohomology on X with the Lie algebra cohomology.

\begin{theorem}\cite{bott}
Let $X = G/B$ and $\mathcal E = G \times_B E$ for a $B$-module $E$. For each $\lambda \in P^+$ there is a vector space isomorphism $\Hom_G(L(\lambda), H^{\bullet}(X, \mathcal E)) \cong H^{\bullet}(\b, \h, \Hom_B(L(\lambda),E) )$.
\end{theorem}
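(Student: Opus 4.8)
The plan is to build the isomorphism out of standard adjunctions together with a Čech-to-derived-functor or, more conveniently, a Hochschild–Serre / Lie-algebra-cohomology computation of sheaf cohomology on $X = G/B$. First I would reduce the computation of $H^\bullet(X,\mathcal E)$ to relative Lie algebra cohomology directly: for a $G$-equivariant vector bundle $\mathcal E = G\times_B E$, the global sections functor on the category of $G$-equivariant quasi-coherent sheaves is computed, after passing to the associated $(\g,B)$-modules, by the relative Lie algebra cohomology functor $H^\bullet(\g,\h;-\otimes ?)$; more precisely, there is a standard isomorphism $H^\bullet(X,\mathcal E)\cong H^\bullet(\g,\h;\,\mathrm{Hom}_{\mathbb C}(U(\g),E)^{\text{$B$-finite}})$ coming from the fact that $\mathcal O_X$-modules correspond to Harish-Chandra-type modules and $R\Gamma$ is the derived functor of $(-)^{G}$-invariants, which in the relative setting is relative Lie algebra cohomology. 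This is the geometric input and is exactly the content of Bott's original argument; I would cite \cite{bott} for it rather than reprove it.

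Next I would bring in the functor $\mathrm{Hom}_G(L(\lambda),-)$. Since $L(\lambda)$ is a finite-dimensional simple $G$-module and cohomology of a $G$-equivariant sheaf is a (possibly infinite-dimensional, but locally finite) $G$-module, taking $\mathrm{Hom}_G(L(\lambda),-)$ is exact and commutes with cohomology, so $\mathrm{Hom}_G(L(\lambda),H^\bullet(X,\mathcal E))\cong H^\bullet\big(\mathrm{Hom}_G(L(\lambda),\,R\Gamma(X,\mathcal E))\big)$. Now I would use the key adjunction: for the $(\g,B)$-picture, $\mathrm{Hom}_G(L(\lambda),-)$ applied to an induced-type object intertwines with $\mathrm{Hom}_B(L(\lambda)|_B,-)$ on the coefficient module, because $L(\lambda)$ is finite-dimensional and self-dual up to the longest element, so $\mathrm{Hom}_{\mathbb C}(U(\g),E)\otimes L(\lambda)^*$ has its $G$-invariants (resp. its relative Lie algebra cohomology) expressible via $\mathrm{Hom}_B(L(\lambda),E)$. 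Concretely: $H^\bullet(\g,\h;\,\mathcal M\otimes L(\lambda)^*)$ with $\mathcal M$ the coinduced module has a Frobenius-reciprocity description that collapses the $\g$-level induction, leaving relative $\b$-cohomology of the $B$-module $\mathrm{Hom}_{\mathbb C}(L(\lambda),E)=\mathrm{Hom}_B(L(\lambda),E)$ (the $B$-module structure on $\mathrm{Hom}$ is the diagonal one, and $\mathrm{Hom}_B$ records the $B$-invariant part only after we have already passed the $\g$-action through). Assembling these, $\mathrm{Hom}_G(L(\lambda),H^\bullet(X,\mathcal E))\cong H^\bullet(\b,\h;\,\mathrm{Hom}_B(L(\lambda),E))$.

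The main obstacle is the middle step: making precise the passage from $G$-equivariant sheaf cohomology on $G/B$ to relative $(\g,\h)$-cohomology of a coinduced module, and in particular getting the coefficient module on the nose to be $\mathrm{Hom}_B(L(\lambda),E)$ rather than some larger coinduced object — this requires carefully tracking which invariants are taken at the $G$-level versus the $\b$-level and invoking the finite-dimensionality (hence reflexivity and $B$-semisimplicity of the $\h$-action) of $L(\lambda)$ and of each graded piece of $E$. Everything else is formal: exactness of $\mathrm{Hom}_G(L(\lambda),-)$ on locally finite $G$-modules, and the standard identification of $R\Gamma$ with relative Lie algebra cohomology. Since the statement is attributed to \cite{bott}, in the write-up I would present this as a sketch, citing \cite{bott} for the geometric identification and spelling out only the $\mathrm{Hom}_G(L(\lambda),-)$ reduction and the Frobenius-reciprocity manipulation, which is where the specific form of the right-hand side comes from.
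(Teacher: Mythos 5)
The paper offers no proof of this statement at all --- it is imported directly from \cite{bott} --- so there is no internal argument to compare yours against; your sketch, which defers the identification of sheaf cohomology on $G/B$ with relative Lie algebra cohomology to \cite{bott} and supplies the formal reductions (exactness of $\Hom_G(L(\lambda),-)$ on locally finite $G$-modules in characteristic $0$, then Frobenius reciprocity), is the standard route and is sound at the level of a sketch. If you were to write it up, the cleaner packaging is to use that $H^{\bullet}(X,\mathcal E)\cong R^{\bullet}\mathrm{ind}_B^G E$, so that exactness of $\Hom_G(L(\lambda),-)$ gives $\Hom_G(L(\lambda),R^{\bullet}\mathrm{ind}_B^G E)\cong \mathrm{Ext}^{\bullet}_B(L(\lambda),E)$, which in characteristic $0$ (the torus being linearly reductive) is exactly $H^{\bullet}(\b,\h,\Hom_{\C}(L(\lambda),E))$; this bypasses the coinduced-module manipulation that you yourself identify as the main obstacle, and it makes the coefficient module come out on the nose. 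Two small points to fix in your write-up: the symbol $\Hom_B(L(\lambda),E)$ in the statement must be read as $\Hom_{\C}(L(\lambda),E)\cong E\otimes L(\lambda)^*$ with the diagonal $B$-action (as the paper clarifies immediately after the theorem), not as $B$-equivariant homomorphisms; and the appeal to self-duality of $L(\lambda)$ up to the longest Weyl group element is not needed anywhere --- only the canonical isomorphism $\Hom_G(L(\lambda),M)\cong (M\otimes L(\lambda)^*)^G$ is used.
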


Here, $H^{\bullet}(\b,\h,\Hom_B(L(\lambda),E))$ is the $\h$-equivariant Chevalley-Eilenberg cohomology of $\b$ with coefficients in $E \otimes L(\lambda)^*$. By definition, this is the cohomology of the complex $\Hom(\wedge^{\bullet} (\b/\h)^* \otimes \Hom_B(L(\lambda),E))^{\h}$. We also need the following lemma, originally stated in \cite{bott} :

\begin{lemma}\cite{bott}
For a $\b$-module $F$ there is an isomorphism $H^{\bullet}(\b, \h, F) \cong H^{\bullet}(\n,F)^{\h}$.
\end{lemma}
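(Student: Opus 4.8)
The plan is to unwind the definition of the relative Chevalley--Eilenberg cohomology and identify the relevant complex with the $\h$-invariants of the absolute $\n$-cohomology complex. Since $\n = [\b,\b]$ is an ideal of $\b$ and $\h$ is a complementary subalgebra, the adjoint action gives an isomorphism of $\h$-modules $\b/\h \cong \n$, hence $\wedge^\bullet(\b/\h)^* \cong \wedge^\bullet\n^*$ as $\h$-modules. Under this identification the complex $(\wedge^\bullet(\b/\h)^*\otimes F)^{\h}$ computing $H^\bullet(\b,\h,F)$ becomes $(\wedge^\bullet\n^*\otimes F)^{\h}$, with the differential inherited from the Chevalley--Eilenberg differential of $\b$.

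The first step is to check that this inherited differential coincides with the Chevalley--Eilenberg differential of $\n$ with coefficients in $F$ (restricted to an $\n$-module). This is a direct computation with the explicit formula for $d$: when all arguments lie in $\n$, the bracket terms $[X_i,X_j]$ stay in $\n$ since $\n$ is a subalgebra, and the module terms $X_i\cdot\omega(\cdots)$ only involve the $\n$-action on $F$; no component along $\h$ ever appears, and the horizontality condition $\iota_X\omega = 0$ for $X\in\h$ is exactly what identifies the $\h$-basic cochains of $C^\bullet(\b,F)$ with $\wedge^\bullet\n^*\otimes F$. Thus $(\wedge^\bullet\n^*\otimes F)^{\h}$ is precisely the subcomplex of $\h$-invariants inside the complex $C^\bullet(\n,F) = \wedge^\bullet\n^*\otimes F$ that computes $H^\bullet(\n,F)$.

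Next I would note that $\h$ acts on all of $C^\bullet(\n,F)$ by cochain maps: since $\h$ normalizes $\n$ and acts compatibly on $F$, its action commutes with the $\n$-differential, so $H^\bullet(\n,F)$ is naturally an $\h$-module. Because $F$ carries a weight-space decomposition --- it arises from a $B$-module, so the maximal torus, and hence $\h$, acts semisimply --- the same holds for $C^\bullet(\n,F)$ and for its cohomology, and the functor $M\mapsto M^{\h}$ is exact on such $\h$-modules. Therefore cohomology commutes with $\h$-invariants, $H^\bullet\!\big((C^\bullet(\n,F))^{\h}\big) \cong H^\bullet(\n,F)^{\h}$, and combining this with the identification of the previous paragraph gives $H^\bullet(\b,\h,F)\cong H^\bullet(\n,F)^{\h}$.

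The main obstacle is the bookkeeping in the first two steps: pinning down the identification of the $\h$-basic cochains in $C^\bullet(\b,F)$ with $C^\bullet(\n,F)^{\h}$ and checking that the two differentials really match under it. The exactness of the $\h$-invariants functor is routine once semisimplicity of the $\h$-action is recorded; alternatively, the whole statement can be read off as the degenerate case of the relative Hochschild--Serre spectral sequence for the ideal $\n$ of $\b$, where $\h$-reductivity collapses everything onto the $\h$-invariant part of $H^\bullet(\n,F)$.
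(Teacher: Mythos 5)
Your proof is correct; the paper does not actually prove this lemma (it only cites Bott), and your argument is precisely the standard one behind that citation: identify the relative cochain complex $\bigl(\wedge^\bullet(\b/\h)^*\otimes F\bigr)^{\h}$ with the $\h$-invariant subcomplex of $C^\bullet(\n,F)=\wedge^\bullet\n^*\otimes F$ using $\b/\h\cong\n$ and horizontality, check the differentials agree, and then commute invariants with cohomology. You also correctly isolated the one genuine hypothesis, namely semisimplicity of the $\h$-action (the weight decomposition making $(-)^{\h}$ exact), which is where the ``reductive pair'' assumption implicit in the lemma enters and which holds in the paper's applications since $F=\Hom_B(L(\lambda),E)$ comes from an algebraic $B$-module.
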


F ollowing \cite{LQ}, we will explain how to compute the cohomology groups $H^{\bullet}(G/B, \mathcal E)$ using the BGG complex. We need to compute $H^{\bullet}(\b,\Hom_B(L(\lambda, E))^{\h} \cong \text{Ext}^{\bullet}(\Bbb C, L_{\lambda} \otimes E^*)^{\h}$. By definition, the latter is computed by picking an $\h$-graded resolution of $L(\lambda) \otimes E^*$ as a $U(\n)$-module. Tensoring the BGG resolution for $L(\lambda)$ with $E^*$ exactly gives a projective resolution for $L(\lambda) \otimes E^*$. Hence the complex that computes $H^{\bullet}(\n, \Hom_B(L(\lambda),E))^{\h}$ is given by $\Hom(M_{\bullet}(\lambda) \otimes E^*,\Bbb C)^{\h}$. To describe the terms in the previous complex, we use that for a Verma module $M(\mu)$ we have $\Hom_{U(\n)}(M(\mu), E)^{\h}  \cong E[\mu]$. Hence the $k$-th term of the complex computing $\Hom(L(\lambda), H^{\bullet}(X,\mathcal E))$ is given by $\bigoplus_{\ell(w) = k} E[ w \cdot \lambda]$.

\begin{definition} The BGG complex associated to $E$ and $\lambda$, written $\BGG^{\bullet}(E, \lambda)$, is the complex $\Hom(M_{\bullet}(\lambda) \otimes E^*,\Bbb C)^{\h}$. 
\end{definition}

By the discussion before, there is an identification $\BGG^k(E, \lambda) \cong \bigoplus_{\ell(w) = k} E[w \cdot \lambda]$. In this setting, the maps $E[x \cdot \lambda] \to E[w \cdot \lambda]$ are just given by multiplication by $\mathcal{F}(x,w)$. To summarize, the multiplicities $Hom_G(L(\lambda), H^{\bullet}(X, \mathcal E))$ can be computed using the BGG complex as explained. 

\subsection{Examples}

Below we present two examples that can be computed by hand, using the algorithm previously described. The computations were done by hand and confirmed by our program. Our first example is the cohomology of the flag variety $G/B$ for $G = G_2$. Unlike the traditional approach via the Chevalley-Eilenberg complex, the computation using the BGG resolution is straightforward. In our second example (which is more involved), we compute the Hochschild cohomology of the complete flag variety $X = G/B$ for $G = \SL_4$. The cohomology of the $G_2$-flag variety is well-known but our computation of $HH^{\bullet}(G/B)$ ($G = \SL_4$) seems to be new.

\subsubsection{Cohomology of $G/B$, $G = G_2$}

Let $X$ be the complete flag variety of type $G = G_2$. We would like to compute $H^k(X, \C)$. By the Hodge decomposition and the fact that Schubert classes are algebraic, this group is isomorphic to $H^q(X, \Omega_X^q)$. Moreover, thanks to Poincaré duality we only need to compute it for $q=1,2,3$. Finally, it is a well-known fact that the $\g$-structure on $H^q(X, \Omega_X^q)$ is trivial, hence we can focus solely on the multiplicity of the trivial representation. 

Let $\alpha_1$ denote the short root and $\alpha_2$ the long root. We pick a Chevalley basis of $\n$ with elements $f_1, f_2, f_{12}, f_{112}, f_{1112}$ and $f_{11122}$ where the subscript indicates the weight. The following diagram represents the dot-orbit of $0$, indexed by the Weyl group of $G_2$. It shows which weights will appear in the BGG complex associated to $\lambda =0$.

\[
\begin{tikzcd}[column sep=small]
    & -9\alpha_1 - 6\alpha_2  \ar[r] \ar[rdd]  & -9\alpha_1 - 4 \alpha_2 \ar[r] \ar[rdd] & -4\alpha_1 - 4\alpha_2 \ar[r] \ar[rdd]  &   -4\alpha_1 - \alpha_2 \ar[r] \ar[rdd]  &  -\alpha_2 \ar[rd] &\\ 
    -10 \alpha_1 - 6\alpha_2 \ar[ru, start anchor={[xshift = 2ex]}] \ar[rd,  start anchor={[xshift = 2ex]}]  & & & & & & 0  \\ 
    & -10\alpha_1 - 5 \alpha_2  \ar[r] \ar[ruu] & -6\alpha_1 - 5 \alpha_2  \ar[r] \ar[ruu] & -6\alpha_1 - 2 \alpha_2  \ar[r] \ar[ruu] & -\alpha_1 - 2 \alpha_2  \ar[r] \ar[ruu] & -\alpha_1 \ar[ru] & 
\end{tikzcd}
\]

In order to compute $H^1(X, \Omega_X^1)$ we use that $\Omega_X^1 = G \times_B \n$. Hence, by our previous discussion we need to compute $H^1\BGG^{\bullet}(\n,0)$. We have $\BGG^2(\n, 0) = \n[-\alpha_1 - 2\alpha_2] \oplus \n[-4\alpha_1-\alpha_2]$. However it's clear that these weight spaces are zero, and it's obvious that $\BGG^0(\n,0) = 0$. Hence we get $H^1(X, \Omega_X) = L(0)^{\oplus 2}$, generated by $f_1$ and $f_2$. That was the expected answer since the Picard group of $G_2/B$ is generated by the divisors associated to the line bundles $\mathscr L_{\alpha_1}$ and $\mathscr L_{\alpha_2}$.  Let us compute $H^2(X, \C) = H^2(X, \Omega^2_X)$. This time our vector bundle is $G \times_B \wedge^2\n$, so we would like to compute $H^2\BGG^{\bullet}(\wedge^2\n,0)$. Clearly, we have $\wedge^2\n[-\alpha_1] = \wedge^2\n[-\alpha_2] = 0 $ and $\wedge^2\n[-6\alpha_1-2\alpha_2] = \wedge^2\n[-4 \alpha_1 -4\alpha_2] = 0$ by direct inspection. So our cohomology is generated by $\wedge^2 \n[-4\alpha_1 - \alpha_2]$ (generated by $f_1 \wedge f_{1112}$) and $\wedge^2\n[-\alpha_1 -2\alpha_2]$ (generated by $f_1 \wedge f_{12}$). As a last example, let us compute $H^3(X, \Omega_X^3)$. Once more it is clear that $\wedge^3\n[-9\alpha_1 - 4 \alpha_2] = 0 = \wedge^3\n[-6\alpha_1 - 5 \alpha_2]$, and similarly $\wedge^3 \n[- \alpha_1 - 2 \alpha_2] = 0 = \wedge^3\n[-4 \alpha_1 - \alpha_2]$. It follows that $H^3(X, \Omega_X^3) \cong \wedge^3\n[-6 \alpha_1 - 2 \alpha_2] \oplus \wedge^3\n[-4 \alpha_1 - 4 \alpha_2]$, generated by $ f_1 \wedge f_{12} \wedge f_{112}$ and $f_2 \wedge f_{12} \wedge f_{11122}$ respectively. Other computations are similar. We present the final result in a table~\ref{tab:G2cohom}.

\begin{table}[!htb]
\centering
\begin{tabular}{ r| c l } 
 $q$ & $\dim H^q(X, \Omega_X^q)$ & Explicit BGG generators \\ 
 \hline 
 $0$ & $1$ & $1$ \\ 
 $1$ & $2$ & $f_1$, $f_2$ \\ 
 $2$ & $2$ & $f_1 \wedge f_{1112}$, $f_2 \wedge f_{12}$ \\ 
 $3$ & $2$ & $f_1 \wedge f_{112} \wedge f_{1112}$, $f_2 \wedge f_{12} \wedge f_{11122}$ \\ 
 $4$ & $2$ & $f_2 \wedge f_{12} \wedge f_{112} \wedge f_{11122}$ \\ 
 & &   $f_1 \wedge f_{112} \wedge f_{1112} \wedge f_{11122}$ \\ 
 $5$ & $2$ & $f_2 \wedge f_{12} \wedge f_{112} \wedge f_{1112} \wedge f_{11122}$ \\ 
 & & $f_1 \wedge f_{12} \wedge f_{112} \wedge f_{1112} \wedge f_{11122}$ \\ 
 $6$ & $1$ & $f_1 \wedge f_2 \wedge f_{12} \wedge f_{112} \wedge f_{1112} \wedge f_{11122}$ 
\end{tabular}\caption{Cohomology of $G/B$ for $G=G_2$}\label{tab:G2cohom}
\end{table}

One can notice the Poincaré duality through the BGG complex. For example $f_1$ and $f_2 \wedge f_{12} \wedge f_{112} \wedge f_{1112} \wedge f_{11122}$ are Poincaré dual to each other. This duality is specific to the case where $\lambda = 0$. In the next example, such duality does not appear.

\subsubsection{Hochschild cohomology of $G/B, G = \SL_4$}

\noindent Recall that the Hochschild cohomology of a smooth scheme $X$ over a field is defined as $HH^{\bullet}(X) := Ext^{\bullet}_{X \times X}(\O_{\Delta}, \O_{\Delta})$, where $\Delta \subset X \times X$ is the diagonal. It is a famous theorem by Hochschild-Kostant-Rosenberg that there is an isomorphism $HH^{\bullet}(X) \cong \bigoplus_{i + j = \bullet} H^i(X, \wedge^j TX)$. If $X = G/B$ (for type $A_4$) it turns out that if $i>0$ then $H^i(X, \wedge^jTX) = 0$. This property holds as well for the grassmannians and the projective spaces. This property was first noticed by Pieter Belmans in his PhD thesis \cite{belmans}. He asked if the property holds for other flag varieties. We confirm this vanishing property for any $G/P$ where $G$ is of type $A_4$. In this paragraph we compute the Hochschild cohomology of $G/B$ for $G$ of type $A_3$. It is a classical result (see \cite{penrose}) that $H^0(G/P, T_{G/P}) = \g$ if $G$ is of type $A$.  Hence we just need to compute $H^0(X, \wedge^j T_X)$ for $j = 2,3,4,5,6$. We have $T_X = \SL_4 \times_B \u$ where $\u \cong \g/\b$ is the Lie subalgebra generated by $e_1,e_2,e_3$. In order to not duplicate computations we will use the symmetry exchanging $\alpha_1$ and $\alpha_2$. Below we list  the cohomology $H^0(X, \wedge^jTX)$ for each $j$.

\begin{itemize}
    \item $H^0(X,\mathcal O_X) = \mathbb C$
    \item $H^0(X,TX)=L(\alpha_1+\alpha_2+\alpha_3)$
    \item $H^0(X,\wedge^2TX)=L(\alpha_1+\alpha_2+\alpha_3)\oplus L(2\alpha_1+2\alpha_2+\alpha_3)\oplus L(\alpha_1+2\alpha_2+2\alpha_3)$
    \item $H^0(X,\wedge^3TX)=L(\alpha_1+\alpha_2+\alpha_3)\oplus L(2\alpha_1+2\alpha_2+\alpha_3)\oplus L(\alpha_1+2\alpha_2+2\alpha_3)\oplus L(2\alpha_1+2\alpha_2+2\alpha_3)\oplus L(\alpha_1+2\alpha_2+\alpha_3)^{\oplus 2}\oplus L(3\alpha_1+2\alpha_2+\alpha_3)\oplus L(\alpha_1+2\alpha_2+3\alpha_3)\oplus L(2\alpha_1+3\alpha_2+2\alpha_3)$
    \item $H^0(X,\wedge^4TX)=L(2\alpha_1+2\alpha_2+\alpha_3)\oplus L(\alpha_1+2\alpha_2+2\alpha_3)\oplus L(2\alpha_1+2\alpha_2+2\alpha_3)^{\oplus 2}\oplus L(2\alpha_1+3\alpha_2+2\alpha_3)^{\oplus 2}\oplus L(3\alpha_1+3\alpha_2+2\alpha_3)\oplus L(2\alpha_1+3\alpha_2+3\alpha_3)\oplus L(2\alpha_1+4\alpha_2+2\alpha_3)$
    \item $H^0(X,\wedge^5TX)=L(2\alpha_1+3\alpha_2+2\alpha_3)\oplus L(3\alpha_1+3\alpha_2+2\alpha_3)\oplus L(2\alpha_1+3\alpha_2+3\alpha_3)\oplus L(3\alpha_1+3\alpha_2+3\alpha_3)\oplus L(3\alpha_1+4\alpha_2+2\alpha_3)\oplus L(2\alpha_1+4\alpha_2+3\alpha_3)$
    \item $H^0(X,\wedge^6TX)=L(3\alpha_1+4\alpha_2+3\alpha_3)$
\end{itemize}

We consider the computation for $H^0(X, \wedge^2TX)$ in some more detail. We fix the notation and take the following basis of $\h$: $h_1 = \text{diag}(1,-1,0,0)$, $h_2 = \text{diag}(1,0,-1,0)$ and $h_3 = \text{diag}(1,0,0,-1)$. For $i=1,2,3$ we take $f_i, e_i \in \sl_4$ so that $f_i$ is lower-triangular and each $(e_i,f_i,h_i)$ is a $\sl_2$-triple. We define $ f_{12} := [f_1,f_2], f_{23} := [f_2,f_3]$ and $f_{123} := [f_1,f_{23}] = [f_{12}, f_3]$ and $ e_{12} = [e_2,e_1], e_{23} = [e_3,e_2]$ and $e_{123} = [e_3, e_{12}]$. Let us compute the coadjoint action of $\b$ on $\u$. Note that the $\h$-action is the same as the adjoint action. Here we list the non-zero coadjoint actions of $\n$ on $\u$:
\[ \begin{tabular}{r | c c c}
 
  ${}$ & $e_{12}$ & $e_{23} $ & $e_{123}$  \\ 
  \hline $f_1$ & $e_2$ & $0 $ & $e_{23}$  \\ 
  $f_2$ & $-e_1$ & $e_{3} $ & $0$  \\ 
  $f_3$ & $0$ & $-e_2 $ & $-e_{12}$  \\ 
  $f_{12}$ & $0$ & $0 $ & $e_{3}$  \\ 
 
\end{tabular} \\
\] 

 Since $\u$ is nilpotent there is a natural filtration on it, and each composition factor splits as a direct sum of $1$-dimensional weight spaces. Geometrically it means that the vector bundle $TX$ has a filtration with direct sum of line bundles as composition factors. For example, in our case the composition factors for $TX$ are $\mathscr L_{\alpha_1} \oplus \mathscr L_{\alpha_2} \oplus \mathscr L_{\alpha_3}, \mathscr L_{\alpha_1 + \alpha_2} \oplus \mathscr L_{\alpha_2 + \alpha_3}$ and $\mathscr L_{\alpha_1 + \alpha_2 + \alpha_3}$, where for $\mu \in P$, $\mathscr L_{\mu}$ is the line bundle $G \times_B \C_{\mu}$. By considering the associated long exact sequences it is clear that there is a surjection $\bigoplus_{\mu \in \text{wt}(\wedge^2\u)} H^{0}(X, \mathscr L_{\mu}) \to H^0(X, \wedge^2TX)$. However, since line bundles might have higher cohomology, there could be some cancellation in the long exact sequence. Using the Borel-Weil theorem it is at least clear that dot-singular weights will not contribute, hence we can restrict ourselves to line bundles so that the corresponding weight is dot-regular. For each such weight $\lambda$, the cancellation will come from a dot-regular, non-dominant weight $\mu$ in the same dot-orbit as $\lambda$. Hence, if there is no such $\mu$ we know that the $L(\lambda)$-isotypic component of $H^0(X, \wedge^2TX)$ is given by the $\lambda$-weight space of $\wedge^2 \n$. In most cases we don't know how to compute the maps in the long exact sequence explicitly. However, we can still compute the multiplicity of $L(\lambda)$ in $H^0(X, \wedge^2TX)$ using the BGG complex.

The dominant, dot-regular weights appearing in $\wedge^2 \u$ are $\alpha_1 + \alpha_2 + \alpha_3$ with multiplicity $2$,  $\alpha_1 + 2\alpha_2 + \alpha_3$ with multiplicity $2$, $2\alpha_1 + 2\alpha_2 + \alpha_3$ and $\alpha_1 + 2\alpha_2 + 2\alpha_3$ with multiplicity 1. The dot-regular weights that are non-dominant are $\alpha_1 + \alpha_3$, $\alpha_1 + 2\alpha_2$ and $2\alpha_2 + \alpha_3$. We have $s_2 \cdot (\alpha_1 + \alpha_3) = \alpha_1 + \alpha_2 + \alpha_3$ and $s_3 \cdot (\alpha_1 + 2 \alpha_2) = \alpha_1 + 2 \alpha_2 + \alpha_3$. This means that these two dominant weights might appear in the cohomology.

To compute the multiplicity of $L(\alpha_1 + \alpha_2 + \alpha_3)$ in $H^1(X, \wedge^2TX)$, we use the BGG complex $\BGG^{\bullet}(\wedge^2\u, \alpha_1 + \alpha_2 + \alpha_3)$. Since $s_1 \cdot (\alpha_1 + \alpha_2 + \alpha_3) = -\alpha_1 + \alpha_2 + \alpha_3$ and $s_2 \cdot (\alpha_1 + \alpha_2 + \alpha_3) = \alpha_1 + \alpha_3$. We see that $\BGG^1(\wedge^2\u, \alpha_1 + \alpha_2 + \alpha_3) = \C\{e_1 \wedge e_3\}$. Cleary $\BGG^0(\wedge^2\u, \alpha_1 + \alpha_2 + \alpha_3)$ is spanned by $e_1 \wedge e_{23}$ and $e_{12} \wedge e_3$. The differential given by the coadjoint action is $x \mapsto f_2 x$ and it is clearly surjective, hence $\Hom(L(\alpha_1 + \alpha_2 + \alpha_3),H^1(X, \wedge^2 TX)) = 0$ and $\Hom(L(\alpha_1 + \alpha_2 + \alpha_3),H^0(X, \wedge^2 TX)) = \C$. 

To compute $\Hom(L(\alpha_1 + 2\alpha_2 + \alpha_3), H^1(X, \wedge^2 TX))$ we also use the BGG complex: 
\begin{equation}
    \wedge^2 \u[\alpha_1 + 2\alpha_2 + \alpha_3] \to   \wedge^2 \u[2 \alpha_2 + \alpha_3] \oplus   \wedge^2 \u[\alpha_1 + 2 \alpha_2] \to 0
\end{equation}

A basis of $\wedge^2 \u[\alpha_1 + 2\alpha_2 + \alpha_3]$ is given by $e_{12} \wedge e_{23}$ and $ e_2 \wedge e_{123}$ and $ \wedge^2 \u[2 \alpha_2 + \alpha_3] \oplus   \wedge^2 \u[\alpha_1 + 2 \alpha_2]$ has basis spanned by $e_2 \wedge e_{12}$ and $e_2 \wedge e_{23}$. The differential is 
\begin{align*}
   d(e_2 \wedge e_{123})  &= e_2 \wedge e_{23} - e_2 \wedge e_{12}\\
    d(e_{2} \wedge e_{123}) &= e_2 \wedge e_{23} + e_2 \wedge e_{12}
\end{align*}
 meaning that $d$ is surjective so again there is no cohomology. We get  
 \[
     \Hom(L_{\alpha_1 + 2\alpha_2 + \alpha_3},H^1(X, \wedge^2 TX)) = 0 \text{, and } \Hom(L_{\alpha_1 + 2\alpha_2 + \alpha_3},H^0(X, \wedge^2 TX)) = \C.
 \]
 To summarize, $H^{\bullet}(X, \wedge^2TX)$ has no higher cohomology, and 
 \begin{equation}
    H^0(X, \wedge^2 TX) \cong L(\alpha_1 + \alpha_2 + \alpha_3) \oplus L(2(\alpha_1 + \alpha_2) + \alpha_3) \oplus L(\alpha_1 + 2(\alpha_2 + \alpha_3)).
 \end{equation}

\section{Description of the algorithm} \label{sec:Algorithm}

The algorithm can be divided in several steps. The first step consists of constructing the BGG complex, and computing the maps of the complex. Then we need to compute a distribution of signs on the edges to make the differential square to zero. Next we need a way to represent a basis of a $\mathfrak b$-module. Then we need a way to efficiently compute the $U(\mathfrak n)$-action on this basis, and finally we use the $U(\mathfrak n)$ action to construct a differential and compute its associated cohomology. 

\subsection{Maps in the BGG complex}
The first step in the algorithm is to construct the BGG complex and compute its maps (up to scalar). We begin by constructing the Bruhat graph $\mathscr B$ of the Weyl group $W$ associated to the simple Lie algebra $\mathfrak g$. This is done using the \cite{sagemath} implementation of the Weyl group.

For the remainder of the section we now fix an integral dominant weight $\lambda\in P^+$. We identify the weights $\wt(U(\mathfrak g))$ with $\mathbb{Z}^n$ in the basis of simple roots, where $n$ is the rank of $\mathfrak{g}$. Furthermore we identify
\[\wt(U(\mathfrak n))\simeq \mathbb{N}^n=\{(a_1,\ldots,a_n)\in \mathbb{Z}^n\,\mid\, a_i\leq 0\}.\]
We then fix the following ordering on the set of weights $\wt(U(\mathfrak n))=\mathbb{N}^n$. Write a weight $\xi=(\xi_1,\ldots \xi_n)$, then we say $\xi<\xi'$ if $\sum_i\xi_i<\sum_i\xi'_i$, or if this is equal then if the first non-zero coefficient of $\xi-\xi'$ is negative (this is the convention used in \cite{sagemath}). We then work with a PBW basis on $U(\mathfrak{n})$, determined by this ordering of the weights. Efficient multiplication in this basis is implemented in \cite{sagemath}.

For each edge $w\to w'$ in $\mathscr B$, there is a unique morphism of Verma modules $M(w' \cdot \lambda) \to M(w \cdot \lambda)$. As we discussed before, under the canonical identification $M(w \cdot \lambda) \cong U(\n)$, this morphism is determined by the image of the highest weight vector which is an element $\mathcal F(w,w')\in U(\mathfrak{n})$, well-defined up to scalar multiplication. Let $\xi=w'\cdot \lambda-w\cdot\lambda$, where $\xi\in \wt(U(\mathfrak{n}))$. Our goal is to determine these elements $\mathcal F(w,w')$ up to scalar multiple.

We let $U(\mathfrak{n})[\xi]$ denote the subspace of $U(\mathfrak n)$ consisting of elements with weight $\xi$. We note that if $\xi=m\alpha_i$ is a multiple of a simple root, then $U(\mathfrak n)[\xi]$ is one-dimensional and spanned by $(f_{\alpha_i})^m$. Hence $\mathcal F(w,w')=\gamma(f_{\alpha_i})^m$, for some scalar $\gamma$, which we can choose to be $\gamma=1$. If $\xi$ is not a multiple of a simple root, we find a basis for $U(\mathfrak{n})[\xi]$ by a simple combinatorial algorithm. 

Suppose we have a square $C\in \mathcal C$ in the Bruhat graph where we know three out of the four maps. We can then use commutativity to compute the fourth. Suppose without loss of generality that we have a square $(w',x,y,w)$ for which we know all maps except $\mathcal{F}(x,w')$; the other cases are completely analogous. 

\begin{equation}
    \begin{tikzcd}[column sep=5em]
        &x \ar[dr,dotted,"{\mathcal{F}(x,w')}"] \\ 
        w\ar[dr,"{\mathcal{F}(w,y)}"'] \ar[ur,"{\mathcal{F}(w,x)}"] &&w'\\
        &y  \ar[ur,"{\mathcal{F}(y,w')}"']
    \end{tikzcd}
\end{equation}

Let $\xi=w'\cdot \lambda-x\cdot\lambda$, and let $\xi'=x\cdot \lambda-w\cdot\lambda$. We have to find the unique $\mathcal{F}(x,w')\in U(\mathfrak{n})[\xi]$ such that 
\begin{equation}
    \mathcal{F}(x,w')\cdot \mathcal{F}(w,x) = \mathcal{F}(y,w')\cdot \mathcal{F}(w,y)
\end{equation} 
Let $\{f_i\}$ be a PBW basis of $U(\mathfrak{n})[\xi]$, then for each $i$ we compute $A_i=f_i\cdot \mathcal{F}(w,x)=\sum_jA_{ij}g_j$ where $\{g_j\}$ is a basis of $U(\mathfrak{n})[\xi]$. Note that in practice we do not have to compute a basis of $U(\mathfrak{n})[\xi']$ since we only use the monomials $g_j$ for which some $A_{ij}$ is non-zero. We then also define $b_j$ by $\mathcal{F}(y,w')\cdot \mathcal{F}(w,y)=\sum_jb_jg_j$. We then obtain $\mathcal{F}(x,w')$ by solving the integer linear problem $A^\top v=b$, i.e. $\mathcal{F}(x,w')=\sum_iv_if_i$. This problem can be solved with a package for exact integer linear algebra such as  \cite{linbox} or \cite{flint}.

To compute all the maps in the entire complex, we do the following. We start by identifying all the edges $(w,w')$ where $w'\cdot \lambda-w\cdot \lambda = m\alpha_i$, and set $\mathcal{F}(w,w') = (f_{\alpha_i})^m$. Then we iteratively find a square where we know three out of four maps, and compute the fourth. By lemma \ref{prop:simplerootsquare}, this allows us to compute all the maps.

\subsection{Signs in the complex}
Let $\mathscr B$ be the Bruhat graph of the Weyl group $W$ associated to the simple Lie algebra $\mathfrak g$. We wish to compute a distribution of signs $\sigma\colon E(\mathscr B)\to \{+1,-1\}$ on the edges $E(\mathscr B)$ of the Bruhat graph. Let $\mathcal{C}$ be the set of squares in the Bruhat graph (cf. prop. \ref{prop:bruhatsquare}), and call a square $C=(e_1,e_2,e_3,e_4)$ good (resp. bad) if the product of signs $\sigma(C):=\prod_i\sigma(e_i)$ is $-1$ (resp. $+1$). The aim is to find a choice of signs $\sigma$ such that all the squares $C\in\mathcal C$ are good. 

 We note that flipping the sign $\sigma(e)$ will reduce the number of bad squares if and only if: \begin{equation}\sum_{\{C|e\in C\}}\sigma(C)>0.\end{equation} This observation is the basis of the randomized greedy algorithm. The algorithm greedily flips signs of edges if it reduces the total number of bad squares. In practice this does not converge, therefore every time we run out of signs to flip in this way, we flip a number of signs completely at random. This algorithm is designed based on a heuristic, and we do not have a theoretical reason why this algorithm works better than sampling sign configurations at random. In practice this algorithm is never a bottleneck. 

\subsection{Constructing weight modules}

Let $M$ be a $\mathfrak b$-module with weight decomposition $M=\bigoplus_{\mu\in \mathbb{Z}^r} M[\mu]$, with $r$ the rank of $\mathfrak g$. To significantly reduce the sparsity of computations we assume that $M$ can be decomposed in the following way. Let $V_{ij}$ be a $\mathfrak b$ module, and suppose that $V_{ij}$ admits a basis compatible with the weight decomposition, and suppose that the $\mathfrak b$-action has integer coefficients in this basis. The integer coefficients are important to ensure exact computations. Then we require $M$ to be of the following form:
\begin{equation}
    \bigoplus_i\left(\bigotimes_j \sym^{n_{ij}}V_{ij} \otimes \wedge^{m_{ij}} V_{ij}\right)
\end{equation}
Moreover we assume this decomposition is compatible with the weight decomposition of $M$ in the sense that if each $X_i$ lies in a component of weight $\wt(X_i)$ in some $V_{ij}$, then
\begin{equation}
    X_1\otimes\dots\otimes X_n\in M\left[\sum_i\wt(X_i)\right].
\end{equation}

Note that if we set $V=M$, then $M$ trivially admits such a decomposition after choosing a basis compatible with the weight decomposition. However, using a decomposition like this we can compute the $U(\mathfrak n)$ action on $M$ much more efficiently, since we can compute the action on each $V_{ij}$ separately, greatly reducing the dimensionality. For example, if $V$ is $k$ dimensional, then the structure coefficients of the action on $\wedge^\ell V$ are the same as the action of just $V$ instead of storing structure coefficients for each basis element of $\wedge^\ell V$. As long as we have a basis of each $V_{ij}$ it is moreover very easy to compute a basis of $M$

In all our examples $V=\mathfrak g$, and $V_J$ corresponds to one of $\mathfrak g,\mathfrak n,\mathfrak u,\mathfrak b\subset \mathfrak g$ (or their parabolic counterparts) with either the adjoint or coadjoint action of $\mathfrak n$, and the basis is the Chevalley basis.

\subsection{Computing the $U(\mathfrak n)$ action}
Next we describe how to compute the $U(\mathfrak n)$ action on a basis of $M[\mu]$. This is the most technical part of the algorithm, and we feel that it is best explained through an example. Let us take $\mathfrak g = \sl_3$, and consider the module $M=\sym^2\mathfrak n$. We will compute the action of $f_1$ and $f_2f_1$ on the entire module. Note that $\mathfrak n$ has basis $f_1,f_2,f_{12}$ which we index by $0,1,2$. Then the only non-zero structure coefficients of the $\mathfrak n$ action are $C_{01}^2=1,C_{10}^2=-1$, corresponding to $[f_1,f_2]=f_{12}$, $[f_2,f_1]=-f_{12}$. We then start with matrix $B$ given by:
\[
    B = \left(\begin{array}{@{}cc|c|c@{}}
        0 & 0 & 0 & 1\\
        0 & 1 & 1 & 1\\
        0 & 2 & 2 & 1\\
        1 & 1 & 3 & 1\\
        1 & 2 & 4 & 1\\
        2 & 2 & 5 & 1
    \end{array}\right)
\]
Here the first columns rows are the indices enumerating a basis of $\sym^2\mathfrak n$, the third column gives the index of each basis element so that we can keep track of where each row came from initially. The final column gives the coefficients assigned to each basis element. Our procedure then gives the following $f_1\cdot B$
\[
 f_1\cdot B = \left(\begin{array}{@{}cc|c|c@{}}
        2 & 1 & 3 & 1\\
        2 & 2 & 4 & 1\\
        0 & 2 & 1 & 1\\
        1 & 2 & 3 & 1\\
    \end{array}\right),
\]
where the first two rows correspond to the action on the first column, and the last two rows to the action on the second column. If we then act by $f_2$ we get
\[
 f_2f_1\cdot B = \left(\begin{array}{@{}cc|c|c@{}}
        2 & 2 & 1 & -1
    \end{array}\right)
\]
This corresponds to the fact that the only non-trivial $f_2f_1$ action is given by 
\[
 f_2f_1(f_1\odot f_2) = -f_{12}\odot f_{12}
\]
If we instead were to just compute the $f_1$ action, then we would have to first of all sort each row of $f_1\cdot B$, then reorder the rows and merge duplicate entries to obtain
\[
 f_1\cdot B = \left(\begin{array}{@{}cc|c|c@{}}
        0 & 2 & 1 & 1\\
        1 & 2 & 3 & 2\\
        2 & 2 & 4 & 1
    \end{array}\right)
\]
This corresponds to the fact that the only non-trivial $f_1$ actions are given by 
\[
 f_1\cdot (f_1\odot f_2) = f_1\odot f_{12},\qquad f_1\cdot (f_2\odot f_2) = 2f_2\odot f_{12},\qquad f_1\cdot (f_2\odot f_{12}) = f_{12}\odot f_{12}
\]

If we have multiple monomials, we compute the action of each and concatenate the results in a big matrix. We then merge the duplicate entries and add the coefficients to obtain the action. When dealing with alternating products of modules one also needs to keep track of signs. Otherwise this procedure easily translates to compute the $U(\mathfrak{n})$ action on a basis of the general type of module described in the previous section. 
    
\subsection{Computing the cohomology}

By the discussion at the beginning of section~\ref{sec:FlagVar}, we simply need to explain how the computer algorithm computes the multiplicity of $L(\lambda)$ in the cohomology. However, let us emphasize that the algorithm computes the set of $\lambda$ so that $\dim_G(L(\lambda), H^{\bullet}(G/B, \mathcal E)) \neq 0$ and computes it for each $\lambda$. Fix an integral dominant weight $\lambda\in P^+$ and $\mathfrak b$-module $M$. We will describe how to compute the cohomology $H^i(\BGG_{\bullet}(M,\lambda))$ of the BGG resolution of $M$. The spaces $\BGG_{i}(M,\lambda)$ are given by $\bigoplus_{\ell(w)=i}M[w\cdot \mu]$, and the differential is given by 
\begin{equation}
    d_i = \hspace{-1em}\sum_{\substack{\ell(w)=i\\w\to w'\in \mathscr B}}\hspace{-1em} \sigma(w,w)\mathcal F(w,w').
\end{equation}
To compute this differential we compute the action of the $\mathcal F(w,w')$ as described in the previous section. This gives a sparse integer matrix $\mathcal D_i$, with same non-zero entries as $d_i$. Because some rows of $d_i$ may be entirely zero, the sparse matrix $\mathcal D_i$ has the same rank as $d_i$, but can have smaller kernel dimension. We hence compute the rank of $\mathcal D_i$ and obtain the cohomology dimensions through the rank-nullity theorem. The rank of $\mathcal D_i$ can be computed through (dense or sparse) row reduction, as implemented in exact integer linear algebra packages such as \cite{flint} or \cite{linbox}. Since the rank of a matrix is numerically unstable, it is important that $\mathcal D_i$ has integer coefficients, so that exact rank computations are possible.

\section{Potential extensions of the algorithm}\label{sec:Extensions}

The implementation of the algorithm is currently a work in progress, and we expect the implementation to have additional features in the future. The code of the most computationally intensive parts will be improved, and parallelized where possible. Most of the parts of the algorithm fall under the category of embarrassingly parallel problems, and could therefore benefit significantly from parallelization. On the other hand it appears some crucial parts of the algorithm are bound by memory speed, and it is therefore unknown how much overall performance gain there will be from parallelization with a single machine. 

Currently the most computationally intensive part of the algorithm is computing the maps in the BGG complex. A better implementation could partially mitigate this, but it might also be possible to improve the algorithm itself in this regard. The algorithm solves division problems in $U(\mathfrak n)$ in a relatively naive way, and a smarter division algorithm could significantly reduce the run time. Specifically for type $A_n$ explicit formulas for the BGG maps have been derived \cite{MFF,Xiao}. Implementing this would improve speed for type $A_n$. For a large class of weights, \cite{Xiao2} also derived formulas for type $C_n$, but the problem remains unsolved for general type. Another approach is given by Lutsyuk \cite{Lutsyuk}. They derived a recursion formula which is not immediately useful for deriving a general formula due to its complexity. However this  formula could still be faster than our current approach.

There is currently limited support for using the quotient of two $\mathfrak b$ modules as a $\mathfrak b$ module, and we intend to extend this support in the future. In particular, the modules appearing in \cite{LQ} are of this kind. Furthermore we intend to implement the construction of kernels and cokernels of maps of $\mathfrak b$-modules to provide support for a larger class of modules. Furthermore if $\lambda$ is a character, we obtain a $\mathfrak b$-module $\mathbb C_\lambda$ with trivial $\mathfrak n$-action. Tensoring with such modules changes the weight decomposition, and we intend to implement them. Finally we intend to support the usage of any highest weight representations of $\g$ as $\mathfrak b$-modules.

Our algorithm uses mainly the `standard' BGG resolution, however there are also `parabolic BGG resolution', see \cite{humphreys}. Since we were mainly interested by sheaf cohomology on flag varieties, a quick inspection of the Leray-Hirsch spectral sequence gives that the sheaf cohomology of a homogeneous vector bundle on $G/P$ (corresponding to a $\p$-module $E$) can be computed on $G/B$ simply by restricting $E$ to $\b$. Hence the parabolic BGG resolution was not needed. However, it turns out that such resolutions have applications in differential geometry and could have some applications in physics, because they correspond to invariant differential operators. In this framework, the algebraic expression we found for the BGG maps correspond to the differential operators written in local coordinates. We hope to be able to extend eventually the algorithm to the parabolic setting, in order to compute examples related to physics e.g as explained in \cite{penrose} (page $179$ contains the relevant parabolic subgroups). 

\section{Examples}\label{sec:Examples}

We present some examples of new results obtainable by our algorithm. The code for all the computations in this section is available at \url{https://github.com/RikVoorhaar/bgg-cohomology}.

First we checked all the bigraded cohomology groups from \cite{LQ} and \cite{LQ2}. They contain bigraded tables that compute the center of the principal block of the small quantum group (and have interpretation as certain cohomology groups on $G/B$). We checked Demazure's computation (namely that $T_X$ has no higher cohomology, and $H^0(X,T_X) = \g$ in type $A$) for $A_{\leq 5}$. We also confirmed all results from \cite{vilonen} accessible to our algorithm, for example for $\g = \sl_6$, 
\[
H^k(X,\mathfrak b^{\otimes 4}) = \left\{\begin{array}{lr}
    L(\alpha_1+2\alpha_2+3\alpha_3+2\alpha_4+\alpha_5) & \text{if } k=5\\
    0 & \text{if } k\neq 2,3,5
\end{array}\right.
\]

\subsection{Hochschild cohomology of some flag varieties}

We already introduced the Hochschild cohomology of smooth algebraic varieties before, we now present more complex examples computed with our program. We confirm that $H^i(X, \wedge^kT_X)=0$ for $k \geq 0, i>0$ for types $A_{\leq 4}$, $B_{\leq 3}$, $C_{\leq 3}$, and $G_2$ and give a strong evidence that the Demazure theorem might be generalized. 

\subsubsection{Hochschild cohomology of the flag varieties of type $G_2$}

We compute $H^i(X, \wedge^jT_X)$ for $X$ the flag variety of type $G_2$. We use the fact that $H^i(X,\wedge^j T_X) = H^i(BGG_\bullet(\wedge^j \mathfrak u))$. We note that the only non-trivial cohomology lies in degree 0. For the complete flag variety we obtain the following results:
\begin{itemize}
    \item $\mathrm H^{0}(X_{},\wedge^{0}T_X)=\mathbb{C}$
    \item $\mathrm H^{0}(X_{},\wedge^{1}T_X)=L\left( 3\alpha_{1}+ 2\alpha_{2}\right)$
    \item $\mathrm H^{0}(X_{},\wedge^{2}T_X)=L\left( 2\alpha_{1}+\alpha_{2}\right)\oplus L\left( 3\alpha_{1}+ 2\alpha_{2}\right)\oplus L\left( 6\alpha_{1}+ 3\alpha_{2}\right)$
    \item $\mathrm H^{0}(X_{},\wedge^{3}T_X)=L\left( 4\alpha_{1}+ 2\alpha_{2}\right)^{2}\oplus L\left( 5\alpha_{1}+ 3\alpha_{2}\right)\oplus L\left( 6\alpha_{1}+ 3\alpha_{2}\right)\oplus L\left( 6\alpha_{1}+ 4\alpha_{2}\right)\oplus L\left( 8\alpha_{1}+ 4\alpha_{2}\right)$
    \item $\mathrm H^{0}(X_{},\wedge^{4}T_X)=L\left( 5\alpha_{1}+ 3\alpha_{2}\right)\oplus L\left( 6\alpha_{1}+ 3\alpha_{2}\right)\oplus L\left( 6\alpha_{1}+ 4\alpha_{2}\right)\oplus L\left( 7\alpha_{1}+ 4\alpha_{2}\right)^{2}\oplus L\left( 8\alpha_{1}+ 4\alpha_{2}\right)\oplus L\left( 9\alpha_{1}+ 5\alpha_{2}\right)$
    \item $\mathrm H^{0}(X_{},\wedge^{5}T_X)=L\left( 7\alpha_{1}+ 4\alpha_{2}\right)\oplus L\left( 8\alpha_{1}+ 5\alpha_{2}\right)\oplus L\left( 9\alpha_{1}+ 5\alpha_{2}\right)\oplus L\left( 10\alpha_{1}+ 5\alpha_{2}\right)\oplus L\left( 9\alpha_{1}+ 6\alpha_{2}\right)$
    \item $\mathrm H^{0}(X_{},\wedge^{6}T_X)=L\left( 10\alpha_{1}+ 6\alpha_{2}\right)$
\end{itemize}
The respective dimensions here are $1$, $14$, $98$, $454$, $1226$, $1574$ and $729$. Next we compute the Hochschild cohomology of $X_{\alpha_i}$ where $X_{\alpha_i} = G/P_{\alpha_1}$, with $G = G_2$, and $P_{\alpha_i}$ the parabolic corresponding to $\alpha_i$, for $i = 1$ (the short simple root) and $\alpha_2$ (the long simple root). Interestingly, we note that for $X_2$, $\Gamma(X_2, T_{X_2})$ is bigger than $\g_2$ (it also contains the quasi-minuscule representation). For $X_{\alpha_1}$ we obtain the following:

\begin{itemize}
    \item $\mathrm H^{0}(X_{\alpha_1},\wedge^{0}T_X)=\mathbb{C}$
    \item $\mathrm H^{0}(X_{\alpha_1},\wedge^{1}T_X)=L\left( 3\alpha_{1}+ 2\alpha_{2}\right)$
    \item $\mathrm H^{0}(X_{\alpha_1},\wedge^{2}T_X)=L\left( 3\alpha_{1}+ 2\alpha_{2}\right)\oplus L\left( 6\alpha_{1}+ 3\alpha_{2}\right)$
    \item $\mathrm H^{0}(X_{\alpha_1},\wedge^{3}T_X)=L\left( 6\alpha_{1}+ 3\alpha_{2}\right)\oplus L\left( 6\alpha_{1}+ 4\alpha_{2}\right)\oplus L\left( 8\alpha_{1}+ 4\alpha_{2}\right)$
    \item $\mathrm H^{0}(X_{\alpha_1},\wedge^{4}T_X)=L\left( 6\alpha_{1}+ 4\alpha_{2}\right)\oplus L\left( 9\alpha_{1}+ 5\alpha_{2}\right)$
    \item $\mathrm H^{0}(X_{\alpha_1},\wedge^{5}T_X)=L\left( 9\alpha_{1}+ 6\alpha_{2}\right)$
\end{itemize}
The respective dimensions for $X_{\alpha_1}$ are $1$, $14$, $91$, $336$, $525$, and $273$. For $X_{\alpha_2}$ we obtain the following:
\begin{itemize}
    \item $\mathrm H^{0}(X_{\alpha_2},\wedge^{0}T_X)=\mathbb{C}$
    \item $\mathrm H^{0}(X_{\alpha_2},\wedge^{1}T_X)=L\left( 2\alpha_{1}+\alpha_{2}\right)\oplus L\left( 3\alpha_{1}+ 2\alpha_{2}\right)$
    \item $\mathrm H^{0}(X_{\alpha_2},\wedge^{2}T_X)=L\left( 2\alpha_{1}+\alpha_{2}\right)\oplus L\left( 3\alpha_{1}+ 2\alpha_{2}\right)\oplus L\left( 4\alpha_{1}+ 2\alpha_{2}\right)\oplus L\left( 5\alpha_{1}+ 3\alpha_{2}\right)\oplus L\left( 6\alpha_{1}+ 3\alpha_{2}\right)$
    \item $\mathrm H^{0}(X_{\alpha_2},\wedge^{3}T_X)=L\left( 4\alpha_{1}+ 2\alpha_{2}\right)\oplus L\left( 5\alpha_{1}+ 3\alpha_{2}\right)\oplus L\left( 6\alpha_{1}+ 3\alpha_{2}\right)\oplus L\left( 6\alpha_{1}+ 4\alpha_{2}\right)\oplus L\left( 7\alpha_{1}+ 4\alpha_{2}\right)\oplus L\left( 8\alpha_{1}+ 4\alpha_{2}\right)$
    \item $\mathrm H^{0}(X_{\alpha_2},\wedge^{4}T_X)=L\left( 7\alpha_{1}+ 4\alpha_{2}\right)\oplus L\left( 8\alpha_{1}+ 4\alpha_{2}\right)\oplus L\left( 9\alpha_{1}+ 5\alpha_{2}\right)$
    \item $\mathrm H^{0}(X_{\alpha_2},\wedge^{5}T_X)=L\left( 10\alpha_{1}+ 5\alpha_{2}\right)$
\end{itemize}
The respective dimension for $X_{\alpha_2}$ are $1$, $21$, $189$, $616$, $819$, and $378$.

\subsubsection{Vanishing of higher cohomology}

We have performed the computation of the previous section for other types as well. The fact that higher cohomology vanishes for (partial) flag varieties turns out to hold for these types as well:

\begin{proposition}\label{vanishing}
Let $X = G/P$ be a partial flag variety where $G$ is any of $A_{\leq 4}$, $B_{\leq 3}$, $C_{\leq 3}$, or $G_2$. Then for all $i > 0$ and $k \geq 0$ one has $H^i(X, \wedge^kT_X) = 0$.
\end{proposition}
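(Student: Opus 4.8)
The plan is to reduce the statement to a finite list of cohomology computations on full flag varieties, and then verify each of them with the algorithm of Section~\ref{sec:Algorithm}.

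First I would pass from $G/P$ to $G/B$. As observed in Section~\ref{sec:Extensions}, the Leray spectral sequence of the projection $\pi\colon G/B\to G/P$ (whose fibres are the flag varieties $P/B$, so that $\pi_*\O_{G/B}=\O_{G/P}$ and $R^q\pi_*\O_{G/B}=0$ for $q>0$) shows that for any $\p$-module $E$ the cohomology of $G\times_P E$ on $G/P$ equals that of $G\times_B E$ on $G/B$, with $E$ restricted to $\b$. Applying this to $T_{G/P}=G\times_P(\g/\p)$ gives, for all $i$ and $k$,
\[ H^i(G/P,\wedge^k T_{G/P})\;\cong\;H^i\bigl(G/B,\,G\times_B\wedge^k(\g/\p)\bigr)\;\cong\;\bigoplus_{\lambda\in P^+}H^i\bigl(\BGG_\bullet(\wedge^k(\g/\p),\lambda)\bigr)\otimes L(\lambda), \]
the last isomorphism being the content of Section~\ref{sec:FlagVar}; here $\g/\p$ is a subquotient of $\g$, hence carries a weight basis in which $\b$ acts with integer coefficients, as required by the algorithm. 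So it suffices to prove that each complex $\BGG_\bullet(\wedge^k(\g/\p),\lambda)$ has cohomology concentrated in degree $0$.

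Second, I would note that only finitely many of these complexes matter. For each of the types $A_{\le 4}$, $B_{\le 3}$, $C_{\le 3}$, $G_2$ there are finitely many conjugacy classes of parabolics $P$, all of bounded rank; $\wedge^k(\g/\p)\ne 0$ only for $0\le k\le\dim G/P$; and for fixed $(P,k)$ the set of $\lambda\in P^+$ with $\BGG_\bullet(\wedge^k(\g/\p),\lambda)\ne 0$ is finite, since by Borel--Weil--Bott any contributing $\lambda$ must lie in the dot-orbit of a weight of the finite set $\wt(\wedge^k(\g/\p))$ --- and the algorithm enumerates these $\lambda$ automatically. For each triple $(P,k,\lambda)$ in this finite list I would run the algorithm: it builds the Bruhat graph, computes the maps $\mathcal F(w,w')$ and a valid sign distribution $\sigma$, assembles the sparse integer differential matrices $\mathcal D_i$ of $\BGG_\bullet(\wedge^k(\g/\p),\lambda)$, and returns their exact ranks; rank--nullity then yields $\dim H^i$ in each degree, and the claim is that $H^i=0$ whenever $i>0$.

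I expect the only real obstacle to be computational scale rather than conceptual difficulty: both $\dim\wedge^k(\g/\p)$ and $|W|$ (up to $120$ in type $A_4$) grow quickly, so the matrices $\mathcal D_i$ can be large, and one must be sure the computed ranks are rigorous. Two features of the algorithm keep this in hand. The module $\wedge^k(\g/\p)$ is stored through the compact description of Section~\ref{sec:Algorithm}, so the $U(\n)$-action is recorded on $\g/\p$ itself rather than on a basis of its $k$-th exterior power; and all the linear algebra is performed over $\Z$ with an exact integer linear-algebra package, so the ranks --- hence the cohomology dimensions --- are obtained without rounding error. I would emphasise that this only settles the listed types; whether the same vanishing holds for every partial flag variety, as a common generalization of Demazure's theorem, is left as an open problem.
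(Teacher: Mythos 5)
Your proposal matches the paper's approach: the proposition is established precisely by the Leray--Hirsch reduction from $G/P$ to $G/B$ described in Section~\ref{sec:Extensions} (restricting the $\p$-module $\wedge^k(\g/\p)$ to $\b$), followed by an exhaustive run of the algorithm over the finitely many relevant parabolics, exterior powers and dominant weights, with exact integer rank computations certifying the vanishing in positive degrees. The only difference is that you spell out the finiteness and exactness bookkeeping more explicitly than the paper does.
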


\subsection{Non-normality of some algebraic varieties}

Let $\mathcal A_r(\g) := \{(x_1, \dots, x_r) \in \g^{\oplus r} : f(x_1, \dots, x_r) = 0  \}$ be the generalized null-cone, or $\mathcal A_r$ for short (it is not reduced in general but we take the associated reduced variety). This is a natural generalisation of the nilpotent cone, see the introduction of \cite{vilonen} for more details. It is possible to study singularities of $\mathcal A_r$ by computing certains map in sheaf cohomology:

\begin{proposition}[\cite{vilonen}]
The variety $\mathcal A_r$ is normal if and only if the map $\psi : \text{Sym}(\g^{\oplus r}) \to H^0(X, \text{Sym}(\u^{\oplus r}))$ is surjective, where $X = G/B$.
\end{proposition}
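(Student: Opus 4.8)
The plan is to realize $\mathcal A_r$ as the image of an explicit Springer-type resolution and then translate the normality question into the surjectivity of the induced map on global functions. First I would introduce the homogeneous vector bundle $\mathcal E = G\times_B \u^{\oplus r}$ over $X = G/B$, its total space $\widetilde{\mathcal A}_r := G\times_B \u^{\oplus r}$, and the map
\[
    \pi\colon \widetilde{\mathcal A}_r \longrightarrow \g^{\oplus r},\qquad (g, x_1,\dots,x_r)\longmapsto (\Ad(g)x_1,\dots,\Ad(g)x_r).
\]
The first block of work is to identify the image of $\pi$ with (the reduced variety) $\mathcal A_r$: one inclusion is immediate, since any $G$-invariant polynomial without constant term restricts to the constant $0$ on the subspace $\u^{\oplus r}$; the reverse inclusion — that every tuple annihilated by the invariants is $G$-conjugate into a common Borel — is the genuinely geometric input, which I would take from \cite{vilonen} (it is the analogue for tuples of the statement that a nilpotent element lies in the nilradical of some Borel). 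Since $X$ is projective, $\pi$ is proper, so $\mathcal A_r$ is closed in $\g^{\oplus r}$ and $\pi\colon\widetilde{\mathcal A}_r\to\mathcal A_r$ is proper and surjective; and since $\widetilde{\mathcal A}_r$ is a vector bundle over a smooth irreducible base it is smooth and irreducible, whence so is $\mathcal A_r$. I would also record, again from \cite{vilonen}, that $\pi$ is birational onto $\mathcal A_r$ (a generic tuple lies in a unique Borel), so that $\pi$ is a resolution of singularities.

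Next I would identify the ring of global functions on the resolution. The bundle projection $p\colon \widetilde{\mathcal A}_r\to X$ satisfies $p_*\O_{\widetilde{\mathcal A}_r}=\sym(\mathcal E^{\vee})$, so $\O(\widetilde{\mathcal A}_r)=\bigoplus_k H^0\!\big(X,\sym^k\mathcal E^{\vee}\big)$, which — under the Killing-form identification $\g\cong\g^*$ that is also used to define $\psi$ on the source — is exactly the graded algebra $H^0(X,\sym(\u^{\oplus r}))$ of \cite{vilonen}. It is a finitely generated $\C$-algebra, and finite as a $\C[\mathcal A_r]$-module because $\pi$ is proper and $\mathcal A_r$ is affine (so $\pi_*\O_{\widetilde{\mathcal A}_r}$ is coherent). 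Unwinding the definitions, the composite
\[
    \C[\g^{\oplus r}]=\sym(\g^{\oplus r})\twoheadrightarrow \C[\mathcal A_r]\xrightarrow{\ \pi^{\#}\ } \O(\widetilde{\mathcal A}_r)=H^0(X,\sym(\u^{\oplus r}))
\]
is precisely the map $\psi$: pulling back a polynomial on $\g^{\oplus r}$ and restricting it over each point $gB\in X$ to the affine subspace $\Ad(g)\u^{\oplus r}$ is exactly reading off a section of the symmetric algebra of the (dual) bundle.

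Finally I would run the standard Zariski-type argument. Since $\widetilde{\mathcal A}_r$ is smooth, hence normal and irreducible, its ring of global functions $A:=\O(\widetilde{\mathcal A}_r)$ is an integrally closed domain (a rational function regular at every point of a normal variety is regular, and every local ring of a normal variety is integrally closed). As $\pi$ is proper and birational onto $\mathcal A_r$, the $\C[\mathcal A_r]$-algebra $A$ is finite with $\mathrm{Frac}(A)=\mathrm{Frac}(\C[\mathcal A_r])$; a finite, integrally closed extension with the same fraction field is the integral closure, so $A=\C[\mathcal A_r^{\mathrm{norm}}]$. Hence $\mathcal A_r$ is normal if and only if $\pi^{\#}\colon\C[\mathcal A_r]\to A$ is an isomorphism; it is always injective ($\mathcal A_r$ is reduced and irreducible and $\pi$ is dominant), so this holds if and only if $\pi^{\#}$ is surjective. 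Since $\C[\g^{\oplus r}]\twoheadrightarrow\C[\mathcal A_r]$ is surjective, $\pi^{\#}$ is surjective if and only if $\psi$ is, which is the assertion.

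I expect the main obstacle to be the geometric input of the first paragraph: verifying that $\pi$ surjects onto all of $\mathcal A_r$ — i.e. that tuples killed by the invariants can be simultaneously conjugated into a Borel — and that it is birational there. For $r=1$ this is classical Springer theory, but for $r\geq 2$ it relies on the instability/cone-structure analysis of \cite{vilonen}; everything afterwards — the identification $\O(\widetilde{\mathcal A}_r)\cong H^0(X,\sym(\u^{\oplus r}))$ and the integral-closure argument — is formal.
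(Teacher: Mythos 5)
The paper itself offers no proof of this statement---it is imported verbatim from \cite{vilonen}---so there is nothing internal to compare against; judged on its own, your argument is the standard one (and, as far as we can tell, the one behind the cited result), and it is essentially correct: realize the null cone as the image of a Springer-type collapsing, identify the global functions on the collapsing with $H^0(X,\sym(\u^{\oplus r}))$, and conclude by the normalization argument that $\mathcal A_r$ is normal iff $\C[\mathcal A_r]\to \O(\widetilde{\mathcal A}_r)$ is onto, which (since $\sym(\g^{\oplus r})\twoheadrightarrow\C[\mathcal A_r]$) is the surjectivity of $\psi$. Two remarks. First, a notational but not harmless point: with the conventions of this paper ($\b=\b^-$, $\u=\n^{\vee}$ a \emph{coadjoint} $\b$-module, not an $\Ad(B)$-stable subspace of $\g$), your map $\pi(g,x_1,\dots,x_r)=(\Ad(g)x_1,\dots,\Ad(g)x_r)$ is not defined on $G\times_B\u^{\oplus r}$; the fiber of the resolution must be the nilradical, $\widetilde{\mathcal A}_r=G\times_B\n^{\oplus r}$, on which $\Ad$ does act, and the module $\u$ then enters precisely through the dual: $\O(G\times_B\n^{\oplus r})\cong\bigoplus_k H^0\bigl(X,\sym^k(\u^{\oplus r})\bigr)$, which also makes the degree-one piece of $\pi^{\#}$ visibly agree with the natural map $\g\to H^0(X,T_X)$ after the Killing identification. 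You gesture at this with the ``dual bundle'' and Killing-form remarks, so it is a slip of bookkeeping rather than of substance, but it should be fixed. Second, of the two geometric inputs you defer to \cite{vilonen}, only the identification of the image of $\pi$ with the null cone (via Hilbert--Mumford/Kraft--Wallach: a tuple killed by all nonconstant invariants is simultaneously contracted by a one-parameter subgroup, hence lies in the nilradical of a common Borel) genuinely needs the reference; the birationality is easy and could be included---over the open locus of the image where $x_1$ is regular nilpotent the fiber of $\pi$ is the unique Borel containing $x_1$, and generic injectivity of a proper dominant map in characteristic zero gives birationality. With those two adjustments the proof is complete and matches the intended argument.
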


Here $\psi$ is induced by the natural map $\g \to H^0(G/B, T_X)$. Now if $V$ is a finite-dimensional vector space there is a natural decomposition of $Sym^k(V^{\oplus r})$. By construction $\psi$ commutes with this decomposition, in particular for $k = r$, one summand of $\text{Sym}(V^{\oplus r})$ is given by $V^{\otimes r}$. So if $\g^{\otimes r} \to H^0(X, \u^{\otimes r})$ is not surjective, then $\mathcal A_r$ is not normal. 

\begin{proposition}
The natural map $\g^{\otimes r} \to H^0(X, (\g/\b)^{\otimes r})$ is not surjective for $\g = \b_2$, $r=3$ and $\g = \g_2, r = 2$. In particular the varieties $\mathcal A_3(\b_2)$ and $\mathcal A_2(\g_2)$ are not normal. 
\end{proposition}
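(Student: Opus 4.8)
The plan is to reduce the statement to two finite computations that the algorithm of this paper is designed to perform. Recall from the preceding proposition of \cite{vilonen} that $\mathcal A_r(\g)$ is normal if and only if $\psi\colon \sym(\g^{\oplus r})\to H^0(X,\sym(\u^{\oplus r}))$ is surjective, and that $\psi$ respects the polynomial-multidegree decomposition of $\sym(\g^{\oplus r})=\bigoplus_{\mathbf d}\bigotimes_i \sym^{d_i}\g$; taking the multidegree $\mathbf d=(1,1,\dots,1)$ picks out the summand $\g^{\otimes r}$, so it suffices to show that $\g^{\otimes r}\to H^0(X,(\g/\b)^{\otimes r})=H^0(X,\u^{\otimes r})$ fails to be surjective in the two stated cases. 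Here I use that $H^0(X,(\g/\b)^{\otimes r})=H^0(X,\wedge^0$-nothing$)$ — more precisely that $\u^{\otimes r}$ is the $\b$-module $\u^{\otimes r}$, which is visibly of the tensor-power form $\bigotimes_j V_{ij}$ with $V=\g$ and $V_{ij}=\u$ with the coadjoint $\n$-action, so it falls squarely within the class of modules handled in section~\ref{sec:Algorithm}.

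The concrete steps are as follows. First I would identify the source: $H^0(X,T_X)=\g$ in type $A$, but for types $B$ and $G$ this is no longer automatic, so I would instead argue abstractly — the image of $\psi$ in multidegree $(1,\dots,1)$ is the image of the multiplication-type map $\g^{\otimes r}\to H^0(X,\u^{\otimes r})$ induced by the $G$-equivariant map $\g\to H^0(X,\u)=H^0(X,T_X)$; since this map is $G$-equivariant, its image is a $G$-submodule of $H^0(X,\u^{\otimes r})$ whose dimension is at most $(\dim\g)^r$ (in fact at most $\dim(\g^{\otimes r})$, and typically much smaller once one decomposes $\g^{\otimes r}$ into irreducibles and discards those not appearing in the target). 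Second, I would run the algorithm of section~\ref{sec:Algorithm} to compute $H^0(X,\u^{\otimes r})$ as a $G$-module for $\g=\b_2$, $r=3$ and for $\g=\g_2$, $r=2$: this is exactly a computation of $H^0(\BGG_\bullet(\u^{\otimes r},\lambda))$ over all dominant $\lambda$ in the relevant dot-orbits, using the coadjoint action of $\n$ on $\u$ and the PBW multiplication in $U(\n)$. Third, I would compare: decompose $\g^{\otimes r}$ into irreducibles $\bigoplus_\lambda L(\lambda)^{\oplus m_\lambda}$ and compare the multiplicity $m_\lambda$ with the multiplicity $m'_\lambda$ of $L(\lambda)$ in $H^0(X,\u^{\otimes r})$; if for some $\lambda$ one has $m'_\lambda>m_\lambda$, or if some $L(\lambda)$ appears in the target with $m'_\lambda>0$ but the corresponding $\g$-equivariant component of the map $\g^{\otimes r}\to H^0(X,\u^{\otimes r})$ is not onto its $L(\lambda)$-isotypic part, then $\psi$ is not surjective and $\mathcal A_r$ is not normal. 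The cleanest sufficient criterion is a dimension count: if $\dim H^0(X,\u^{\otimes r})>\dim(\g^{\otimes r})=(\dim\g)^r$, surjectivity is impossible; failing that, a refined isotypic-component count as just described suffices.

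The main obstacle is the final comparison step, not the cohomology computation itself (which the algorithm handles). Two subtleties arise. First, a crude dimension inequality $\dim H^0(X,\u^{\otimes r})>(\dim\g)^r$ may fail even when $\psi$ is not surjective, because some irreducibles of $\g^{\otimes r}$ do not occur in the target at all and hence inflate $(\dim\g)^r$ without helping surjectivity; so one likely needs the isotypic refinement, comparing multiplicities $L(\lambda)$-by-$L(\lambda)$. Second, even when $m'_\lambda\le m_\lambda$ for every $\lambda$, surjectivity onto a given isotypic component is not automatic — the $\g$-equivariant map could be zero on that component — so to \emph{prove} non-surjectivity one wants a $\lambda$ with $m'_\lambda$ strictly exceeding the number of copies of $L(\lambda)$ in $\g^{\otimes r}$ that can map nontrivially, which in practice one establishes by exhibiting some $\lambda$ with $m'_\lambda>m_\lambda$ outright. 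I would therefore expect the proof to conclude by pointing to a specific highest weight $\lambda$ (computed by the program) for which the multiplicity in $H^0(X,\u^{\otimes r})$ strictly exceeds the multiplicity in $\g^{\otimes r}$, for each of the two cases $(\g,r)=(\b_2,3)$ and $(\g_2,2)$; verifying this is a finite check with the algorithm, and the non-normality of $\mathcal A_3(\b_2)$ and $\mathcal A_2(\g_2)$ then follows immediately from the proposition of \cite{vilonen}.
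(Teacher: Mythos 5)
Your proposal is correct and follows essentially the paper's route: reduce via the multidegree decomposition of $\sym(\g^{\oplus r})$ to the component $\g^{\otimes r}\to H^0(X,\u^{\otimes r})$, compute the target with the algorithm, and rule out surjectivity. The paper in fact stops at the ``cleanest sufficient criterion'' you mention — a crude dimension count, $\dim H^0(X,T_X^{\otimes 3})=1024>\dim\g^{\otimes 3}$ in type $B_2$ and $\dim H^0(X,T_X^{\otimes 2})=202>\dim\g^{\otimes 2}$ in type $G_2$ — so the isotypic refinement you prepare as a fallback is not needed.
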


\begin{proof}
For $G$ of type $B_2$, our algorithm gives $\dim H^0(X, T_X^{\otimes 3}) = 1024$, since $\dim \b_2^{\otimes 3} =7^3$, there is no surjection $\b_2^{\otimes 3} \to H^0(X, T_X^{\otimes 3})$. Similarly if $G$  is of type $G_2$ we obtain $\dim H^0(G_2, T_X^{\otimes 2}) = 202$ and since $\dim \g_2^{\otimes 2}=12^2$, we conclude that $\g_2^{\otimes 2}$ does not surject onto $H^0(G_2, T_X^{\otimes 2})$.
\end{proof}

\subsection{Some explicit BGG maps}\label{sub:penroseexample}

Now as explained at the end of section $5$, even the explicit form of the BGG maps are interesting on their own.  Recall that for each weight $\lambda \in P$ one has the corresponding line bundle $\mathscr L_{\lambda} := G \times_B \C_{-\lambda}$.  

\begin{definition}
A differential operators $D : \mathscr L_{\lambda} \to \mathscr L_{\mu}$ is a map of sheaves $ \Gamma(U,\mathscr L_{\lambda}) \to \Gamma(U,\mathscr L_{\mu})$ which is locally of the form $\sum_{\alpha} A_{\alpha} \partial^{\alpha}$, where $A_{\alpha}$ are local sections of $Hom(\mathscr L_{\lambda}, \mathscr L_{\mu})$, $x_i$ are local coordinates, and $\partial^{\alpha} = \frac{\partial^{\alpha_1}}{(\partial x_1)^{\alpha_1}} \dots \frac{\partial^{\alpha_m}}{(\partial x_m)^{\alpha_m}} $
\end{definition}

It turns out that left-invariant differential operators between these line bundles are in bijection with Verma modules homomorphisms:

\begin{theorem}[\cite{penrose}]
There is a bijection between $D(\mathscr L_{\lambda},\mathscr L_{\mu})$ and $Hom_{\g}(M(\mu), M(\lambda))$.
\end{theorem}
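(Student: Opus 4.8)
The plan is to set up the correspondence between differential operators and Verma homomorphisms via the standard identification of induced bundles with induced modules, exploiting the fact that the space of left-invariant differential operators between homogeneous line bundles is computed entirely at the identity coset. First I would recall that for a $B$-module $F$, the sheaf of sections of $G\times_B F$ over the big cell (or $G$-equivariantly over all of $G/B$) can be described via the coalgebra $U(\g)$: left-invariant differential operators from $\mathscr L_\lambda$ to $\mathscr L_\mu$ correspond to $U(\b)$-module maps from the "jet module" at the base point, which is precisely the coinduced/induced picture. Concretely, a left-invariant differential operator $D:\mathscr L_\lambda\to\mathscr L_\mu$ is determined by its symbol at the identity, an element of $\Hom_{\C}$ twisted by the characters, that must be $\b$-equivariant for the $\b$-action coming from differentiating the $G$-action; this $\b$-equivariance condition is exactly what cuts out $\Hom_{U(\b)}(\C_{-\mu}, U(\g)\otimes_{U(\b)}\C_{-\lambda})$ after dualizing, which by Frobenius reciprocity is $\Hom_{U(\g)}(M(\mu),M(\lambda))$ with the sign conventions matching the definition of $\mathscr L_\lambda$ using $\C_{-\lambda}$.

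The key steps, in order: (1) identify local sections of $\mathscr L_\lambda$ near the identity coset with functions on $U^-$ (the opposite unipotent, giving the big cell) valued in $\C_{-\lambda}$, so that differential operators become elements of $U(\u^-)\otimes\Hom(\C_{-\lambda},\C_{-\mu})$ or its completion; (2) impose left $G$-invariance, which forces the operator to be built from the right regular action and makes the coefficient data $\b$-equivariant; (3) recognize the resulting Hom-space as $\Hom_{U(\b)}\bigl(\C_{-\mu}, \mathrm{Res}\,U(\g)\otimes_{U(\b)}\C_{-\lambda}\bigr)^{*}$-type expression and apply Frobenius reciprocity / the PBW decomposition $U(\g)\cong U(\u^-)\otimes U(\b)$ to get $\Hom_{U(\g)}(M(\mu),M(\lambda))$; (4) check that the bijection is compatible with composition and with the weight bookkeeping, in particular that the order of the differential operator matches the degree $\lambda-\mu$ in the root lattice, so the correspondence is well-defined and bijective rather than merely a surjection or injection.

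The main obstacle I expect is step (2)–(3): making precise the passage from "differential operator on the homogeneous bundle" to "element of the universal enveloping algebra acting by the right regular representation", i.e. showing that left-invariance really does pin down the operator to lie in the image of $U(\g)$ (this is essentially the statement that left-invariant differential operators on $G/B$ form $U(\g)\otimes_{U(\b)}(\text{coefficients})$, a version of Beilinson–Bernstein localization of $\D$-modules at the level of global differential operators), and then correctly matching the two sign/duality conventions so that one lands in $\Hom_\g(M(\mu),M(\lambda))$ and not $\Hom_\g(M(\lambda),M(\mu))$ or a contragredient version. Since the statement is attributed to \cite{penrose}, I would present this as a sketch: cite the local-coordinate computation there, verify the $\sl_2$ or rank-one case by hand to fix conventions (a differential operator $\mathscr L_\lambda\to\mathscr L_\mu$ of order $n=1+\langle\lambda,\alpha^\vee\rangle$ corresponding to $f_\alpha^{\,n}\in U(\n)$, matching the "easy case" already discussed after Proposition~\ref{prop1}), and then invoke $G$-equivariance to reduce the general case to this local model, leaving the full verification to the reference.
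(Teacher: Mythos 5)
The paper offers no proof of this theorem: it is quoted verbatim from \cite{penrose}, so there is no internal argument to compare you against, and your proposal has to be judged on its own. On that basis your sketch is correct in outline and is in fact the standard argument of the cited reference: an invariant operator is determined by the $\b$-equivariant ``evaluate at the base point'' functional on finite-order jets of $\mathscr L_\lambda$ at $eB$, the jet module is the restricted dual of a Verma module, and Frobenius reciprocity turns the resulting $\b$-map out of a character into a $\g$-map of Verma modules. Three points are worth making explicit if you write this up. First, the statement is only true for \emph{invariant} differential operators (the sentence preceding the theorem in the paper says ``left-invariant'', even though the displayed definition of $D$ does not); without invariance the left-hand side is infinite-dimensional while $\Hom_{\g}(M(\mu),M(\lambda))$ has dimension at most one by Proposition~\ref{prop1}, so your step (2) is not a technicality but the entire content of the theorem. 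Second, restricting to operators of finite order is what removes the completion you allow in step (1): it replaces the full dual of $U(\u^-)\otimes\Hom(\C_{-\lambda},\C_{-\mu})$ by a restricted dual, so the functional really is given by pairing against an element of a Verma module rather than of some completion. Third, the sign bookkeeping you flag in step (3) does need fixing as written: dualizing the jet module flips the characters, and with the paper's normalization $\mathscr L_\lambda = G\times_B\C_{-\lambda}$ and $M(\mu)=\mathrm{Ind}_{U(\b)}^{U(\g)}\C_{\mu}$ it is exactly this flip that lands you in $\Hom_{\g}(M(\mu),M(\lambda))$ rather than a Hom-space between Vermas attached to $-\lambda,-\mu$; your plan to pin the conventions down via the rank-one case $f_\alpha^{\,1+\langle\lambda,\alpha^\vee\rangle}$ is the right way to do that, and deferring the local-coordinate verification to \cite{penrose} is consistent with how the paper itself treats the statement.
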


Hence, the monomials $\mathcal F(x,w)$ from our algorithm correspond under this bijection to left-invariant differential operators between two equivariant line bundles on $G/B$. It would be intersting to see if the explicit expressions in a PBW basis of the BGG maps can be useful, especially outside of type $A$ where no closed formulas are known. We show these maps for $\g = \b_2$ and that  $\lambda = 2 \alpha_1 + \alpha_2$. Then, the non-trivial maps in the BGG complex are $(1 \to 12, 2 \to 21, 12 \to 121, 21 \to 121)$ where e.g $12 \to 121$ correspond to the map of Verma modules $M(s_2 s_1 \cdot \lambda) \to M(s_1 s_2 s_1 \cdot \lambda)$. The maps are given by : \\

$1 \to 12 : {\,60\,f_{12}^{3} +15\,f_{12}\,f_{1}^{2}\,f_{2}^{2} +60\,f_{12}^{2}\,f_{1}\,f_{2} +\,f_{1}^{3}\,f_{2}^{3} -120\,f_{12}\,f_{1}\,f_{122} -30\,f_{122}\,f_{1}^{2}\,f_{2} }$ \\ 

$2 \to 21 : {\,\,120\,f_{12}\,f_{2}\,f_{122} +360\,f_{122}^{2} -24\,f_{122}\,f_{1}\,f_{2}^{2} -6\,f_{12}\,f_{1}\,f_{2}^{3} +12\,f_{12}^{2}\,f_{2}^{2} +\,f_{1}^{2}\,f_{2}^{4}}$ \\

$12 \to 121 : {\,-420\,f_{12}\,f_{2}\,f_{122} +840\,f_{122}^{2} -84\,f_{122}\,f_{1}\,f_{2}^{2} +14\,f_{12}\,f_{1}\,f_{2}^{3} +42\,f_{12}^{2}\,f_{2}^{2} +\,f_{1}^{2}\,f_{2}^{4}}$ \\ 

$21 \to 121 : {-24\,f_{12}^{3} -6\,f_{12}\,f_{1}^{2}\,f_{2}^{2} +18\,f_{12}^{2}\,f_{1}\,f_{2} +\,f_{1}^{3}\,f_{2}^{3} -36\,f_{12}\,f_{1}\,f_{122} +12\,f_{122}\,f_{1}^{2}\,f_{2}}$

\subsection{Dimension of $H^{0}(X, \wedge^kT_X)$}

We computed the dimension of $H^0(X, \wedge^kT_X)$ for $X = G/P$ for all the partial flag varieties described in section \ref{vanishing}. It is interesting to notice that for type other than $A$, we find several ``exotic vector fields'', that is, vector fields which do not come from the map $\g \to H^0(G/P, G \times_P \u)$. Due to  the $\mathbb Z_2$ symmetry in the Dynkin diagram of $A_n$, some of the partial flag varieties are isomorphic, e.g $X_{\alpha_1}\cong X_{\alpha_n}$. In such cases we only list one of the two. In all tables the left column lists the generators of the parabolic subalgebra, and the second row corresponds to the complete flag variety. All the cohomology is concentrated in degree 0.

\begin{table}[!htb]
    \centering
    \begin{tabular}{r|c c c c c}
         k & 0 & 1 & 2 & 3 & 4 \\
         \hline \hline
         &1 & 10 & 50 & 114 & 81\\
         $\alpha_1$ &1 & 15 & 45 & 35 \\
         $\alpha_2$ &1 & 10 & 35 & 30
    \end{tabular}
    \caption{$\dim H^0(X_\alpha, \wedge^k T_X)$ for type $B_2=C_2$}
\end{table}

\begin{table}[!htb]
    \centering
    \begin{tabular}{r|c c c c c c c}
         k & 0 & 1 & 2 & 3 & 4 & 5 & 6\\
         \hline \hline
         &1 & 15 & 105 & 474 & 1225 & 1547 & 729\\
         $\alpha_1$ &1 & 15 & 105 & 359 & 536 & 280 \\
         $\alpha_2$ &1 & 15 & 125 & 419 & 596 & 300\\
         $\alpha_1,\alpha_2$ &1 & 15 & 45 & 35\\
         $\alpha_1,\alpha_3$ &1 & 15 & 90 & 175 & 105
    \end{tabular}
    \caption{$\dim H^0(X_\alpha, \wedge^k T_X)$ for type $A_3$}
\end{table}

\begin{table}[!htb]
    \centering
    \begin{tabular}{r|c c c c c c c c c c}
         k & 0 & 1 & 2 & 3 & 4 & 5 & 6 & 7 & 8 & 9 \\
         \hline \hline
         &1 & 21 & 210 & 1371 & 6839 & 25012 & 59814 & 85009 & 64184 & 19683\\
         $\alpha_1$ &1 & 21 & 217 & 1546 & 7085 & 19557 & 30653 & 24816 & 8008\\
         $\alpha_2$ &1 & 21 & 252 & 2162 & 10480 & 28013 & 41286 & 31424 & 9625\\
         $\alpha_3$ &1 & 21 & 210 & 1329 & 5979 & 17079 & 27734 & 23031 & 7560\\
         $\alpha_1,\alpha_2$ &1 & 28 & 350 & 1680 & 3675 & 3696 & 1386\\
         $\alpha_1,\alpha_3$ &1 & 21 & 210 & 1344 & 4900 & 9302 & 8547 & 3003\\
         $\alpha_2,\alpha_3$ &1 & 21 & 189 & 616 & 819 & 378
    \end{tabular}
    \caption{$\dim H^0(X_\alpha, \wedge^k T_X)$ for type $B_3$}
\end{table}

\begin{table}[!htb]
    \centering
    \begin{tabular}{r|c c c c c c c c c c}
         k & 0 & 1 & 2 & 3 & 4 & 5 & 6 & 7 & 8 & 9 \\
         \hline \hline
         &1 & 21 & 210 & 1413 & 7021 & 25208 & 59730 & 84771 & 64086 & 19683 \\
         $\alpha_1$ &1 & 21 & 210 & 1399 & 6336 & 17856 & 28637 & 23584 & 7700 \\
         $\alpha_2$ &1 & 21 & 224 & 1574 & 7316 & 20376 & 31857 & 25593 & 8190\\
         $\alpha_3$ &1 & 21 & 294 & 2281 & 10179 & 26613 & 39480 & 30465 & 9450\\
         $\alpha_1,\alpha_2$ &1 & 21 & 189 & 3910 & 2205 & 2457 & 1001 \\
         $\alpha_1,\alpha_3$ &1 & 21 & 280 & 1897 & 6643 & 11934 & 10444 & 3528\\
         $\alpha_2,\alpha_3$ &1 & 35 & 280 & 840 & 1050 & 462 
    \end{tabular}
    \caption{$\dim H^0(X_\alpha, \wedge^k T_X)$ for type $C_3$}
\end{table}

\begin{table}[!htb]
    \centering
    \begin{tabular}{r|c c c c c c c c c c c}
         k & 0 & 1 & 2 & 3 & 4 & 5 & 6 & 7 & 8 & 9 & 10\\
         \hline \hline
        & 1 & 24 & 276 & 2023 & 11027 & 45576 & 134773 & 264427 & 319222 & 212178 & 59049\\
        $\alpha_1$ & 1 & 24 & 276 & 2023 & 10403 & 36648 & 82252 & 109723 & 78526 & 23100\\
        $\alpha_2$ & 1 & 24 & 276 & 2273 & 12703 & 45148 & 98552 & 126873 & 87926 & 25200 \\
        $\alpha_1,\alpha_2$ & 1 & 24 & 276 & 1649 & 5476 & 9875 & 8925 & 3150\\
        $\alpha_1,\alpha_3$ & 1 & 24 & 276 & 2174 & 10326 & 27675 & 41000 & 31325 & 9625\\
        $\alpha_1,\alpha_4$ & 1 & 24 & 276 & 1999 & 9151 & 24575 & 37000 & 28800 & 9000 \\
        $\alpha_2,\alpha_3$ & 1 & 24 & 351 & 2274 & 7426 & 12725 & 10900 & 3675 \\
        $\alpha_1,\alpha_2,\alpha_3$ & 1 & 24 & 126 & 224 & 126 \\
        $\alpha_1,\alpha_2,\alpha_4$ & 1 & 24 & 252 & 1248 & 2877 & 3024 & 1176 
    \end{tabular}
    \caption{$\dim H^0(X_\alpha, \wedge^k T_X)$ for type $A_4$}
\end{table}

\newpage{\ }\newpage

\printbibliography 

\end{document}